\newlist{romanlist}{enumerate}{5}
\setlist[romanlist]{label*=\textup{(}\textit{\roman*}\textup{)}}
\newcommand{\subref}[2]{\mbox{\ref{#1}\hspace{0.4ex}\ref{#2}}}
\newcommand{\ol}{\overline}
\newcommand{\ul}{\underline}
\newcommand{\Mbf}{\mathbf{M}}
\newcommand{\Pbf}{\mathbf{P}}
\newcommand{\Ubf}{\mathbf{U}}
\newcommand{\Rb}{\mathbb{R}}
\newcommand{\Ccal}{\mathcal{C}}
\newcommand{\Ecal}{\mathcal{E}}
\newcommand{\Hcal}{\mathcal{H}}
\newcommand{\Ncal}{\mathcal{N}}
\newcommand{\Pcal}{\mathcal{P}}
\newcommand{\Tcal}{\mathcal{T}}
\newcommand{\Xcal}{\mathcal{X}}
\newcommand{\Ycal}{\mathcal{Y}}
\newcommand{\Zcal}{\mathcal{Z}}
\DeclareMathOperator{\supp}{supp}
\newcommand{\bdKtopA}{\partial\KtopA}
\newcommand{\card}[1]{\ensuremath{{}[#1]{}}}
\newcommand{\cardn}{\card{\numvar}}
\newcommand{\cardN}{\card{N}}
\newcommand{\csA}{\Mbf_{\suffstat}}
\newcommand{\Dbar}{\ol D}
\newcommand{\DbarE}{\ensuremath{\Dbar_{\Expfam}}}
\newcommand{\DE}{\ensuremath{D_{\Expfam}}}
\newcommand{\DNkof}[2]{\ensuremath{D_{#1,#2}}}
\newcommand{\DNk}{\DNkof{N}{k}}
\newcommand{\Expfam}{\ensuremath{\Ecal}}
\newcommand{\expfamuniref}{\ensuremath{\Hcal_{\unimeasure}}}
\newcommand{\exttangspace}{\tilde{\tangspace}}
\newcommand{\ID}[2]{\ensuremath{D(#1\|#2)}}
\newcommand{\IDr}[1]{\ensuremath{\ID{#1}{\refmeasure}}}
\newcommand{\inn}[1]{\ensuremath{{#1}^{\circ}}}
\newcommand{\KtopA}{\Ubf_{\normspace}}
\newcommand{\mommap}{\ensuremath{\pi_{\suffstat}}}
\newcommand{\normspace}{\Ncal}
\newcommand{\numvar}{n}
\newcommand{\pms}{\Pbf} 
\newcommand{\pmsX}{\Pbf(\Xcal)} 
\newcommand{\pospartmap}{\Psi^{+}}
\newcommand{\proj}[2]{\ensuremath{#1_{#2}}}
\newcommand{\projE}[1]{\ensuremath{\proj{#1}{\Expfam}}}
\newcommand{\projPE}{\projE{P}}
\newcommand{\PsiE}{\ensuremath{\Psi_{\Expfam}}}
\newcommand{\refmeasure}{\nu}
\newcommand{\refmeasurex}{\ensuremath{\refmeasure_{x}}}
\newcommand{\Rnn}{\Rb_{\ge}}
\newcommand{\Set}[1]{\ensuremath{\left\{#1\right\}}}
\newcommand{\spms}[1]{\ensuremath{\inn{\pms(#1)}}} 
\newcommand{\spmsX}{\spms{\Xcal}}
\newcommand{\suffstat}{A}
\newcommand{\tangspace}{\Tcal}
\newcommand{\unimeasure}{\ensuremath{\mathbf{1}}}
  \newtheorem{lemma}{Lemma}
  \newtheorem{prop}[lemma]{Proposition}
  \newtheorem{thm}[lemma]{Theorem}
  \newtheorem{cor}[lemma]{Corollary}
  \newtheorem{conj}[lemma]{Conjecture}
  \newtheorem*{conjure*}{Conjecture~\ref{con:DNk-partmod}}
\theoremstyle{definition}
  \newtheorem{defi}[lemma]{Definition}
\theoremstyle{remark}
  \newtheorem{rem}[lemma]{Remark}
  \newtheorem{ex}[lemma]{Example}
\author{Johannes~Rauh}
\title{Optimally approximating exponential families}
\date{October 27, 2011}
\begin{document}
\maketitle

\begin{abstract}
  This article studies exponential families $\Expfam$ on finite sets such that the information divergence
  $\ID{P}{\Expfam}$ of an arbitrary probability distribution from $\Expfam$ is bounded by some constant $D>0$.  A
  particular class of low-dimensional exponential families that have low values of $D$ can be obtained from partitions
  of the state space.  The main results concern optimality properties of these partition exponential families.
  Exponential families where
  $D=\log(2)$ are studied in detail.  This case is special, because if $D<\log(2)$, then $\Expfam$ contains all
  probability measures with full support.
\end{abstract}

\section{Introduction}
\label{sec:intro}

Let $\Xcal$ be a finite set of cardinality $N$, and denote by $\pmsX$ the set of probability distributions on $\Xcal$.
The information divergence $\ID PQ$ is a natural distance measure on~$\pmsX$.  For any exponential family $\Expfam$ on
$\Xcal$ (as defined in Section~\ref{sec:prelim}) and any $P\in\pmsX$ write $\DE(P)=\inf_{Q\in\Ecal}D(P\|Q)$.  This
article discusses the following question:
\begin{itemize}
\item Let $D>0$, and choose a partial order on the exponential families.  Which exponential families are minimal among
  all exponential families $\Expfam$ satisfying $\max \DE \le D$?  What is the answer to this question under further
  constraints on~$\Expfam$?
\end{itemize}

This question is related to finding the maximizers of the information divergence from an exponential family, a problem
which was first formulated by Nihat Ay in~\cite{Ay02:Pragmatic_structuring}.  See~\cite{Rauh11:Thesis} for an overview
and further references.  The present work builds on recent progress in~\cite{Rauh11:Finding_Maximizers}
and~\cite{MatusRauh11:Maximization-ISIT2011}.

There are at least two partial orders of interest:
\begin{romanlist}
\item
  \label{en:dim-order}
  The partial order induced by the dimensions of the exponential families.
\item
  \label{en:inclusion-order}
  The partial order by inclusion.
\end{romanlist}
The partial order~\ref{en:dim-order} is particularly important for applications, since the dimension of an exponential
family is one of the most important invariants that determine the complexity of all computations.  The partial
order~\ref{en:inclusion-order} can be seen as a ``local relaxation'': A candidate exponential family $\Expfam$ is only
compared to ``similar'' exponential families, contained in $\Expfam$.

\begin{defi}
  Let $\Hcal$ be a set of exponential families.  An exponential family $\Expfam\in\Hcal$ is called \emph{inclusion
    $D$-optimal among $\Hcal$} for some $D\ge\max\DE$ if every $\Expfam'\in\Hcal$ strictly contained in $\Expfam$
  satisfies $\max\DE \le D < \max D_{\Expfam'}$.  An exponential family $\Expfam\in\Hcal$ is called \emph{dimension
    $D$-optimal among $\Hcal$} if every exponential family $\Expfam'\in\Hcal$ of smaller dimension satisfies $\max\DE
  \le D < \max D_{\Expfam'}$.  Exponential families that are inclusion or dimension $D$-optimal among $\Hcal$ for some
  $D$ are also called \emph{inclusion} or \emph{dimension optimal among~$\Hcal$}, without reference to $D$.  If $\Hcal$
  equals the set of all exponential families, then the reference to $\Hcal$ may be omitted in all definitions.
  Let
  \begin{equation*}
    \DNk(\Hcal) = \min\Set{\max\DE:\Expfam\in\Hcal\text{ is an exponential family of dimension }k\text{ on }\cardN}.
  \end{equation*}

\end{defi}
As an example, the set $\Hcal$ may be the set of hierarchical models, the set of graphical models or the set
$\expfamuniref$ of exponential families containing the uniform distribution.  Obviously, any dimension optimal
model is also inclusion optimal.  The converse statement does not hold, see Example~\ref{ex:3-optimality} below.

A $D$-optimal exponential family $\Expfam$ can approximate arbitrary probability measures well, up to a maximal
divergence of $D$.  Yaroslav Bulatov proposed to use such exponential families in machine learning (personal
communication), for example when using the \emph{minimax algorithm}~\cite{ZhuWuMumford97:Minimax_Entropy_Principle} by
Zhu, Wu and Mumford or the \emph{feature induction
  algorithm}~\cite{DellaPietra2Lafferty97:Inducing_Features_of_Random_Fields} by Della Pietra, Della Pietra and
Lafferty.  Both algorithms inductively construct an exponential family by adding functions 
(``features'') to the tangent space in order to approximate a given distribution.
Applications of the results of the present paper to machine learning will not be discussed in here, but in a future
work.


One motivation to restrict the class $\Hcal$ of exponential families is that the learning system may not be able to
represent arbitrary exponential families.  Another motivation is given by Jaynes' principle of maximum
entropy~\cite{Jaynes57:Information_Theory_and_Statistical_Mechanics}, which suggests to use the class $\expfamuniref$ of
exponential families with uniform reference measure.

This paper also introduces the class of \emph{partition models} (see Section~\ref{sec:partmod}): A probability measure
$P$ belongs to the partition model associated to a partition $\Xcal'=(\Xcal^{1},\dots,\Xcal^{N'})$ if the restriction of
$P$ to each block $\Xcal^{i}$ is uniform.  Conjecture~\ref{con:DNk-partmod} relates partition models to the above
question:
\begin{conjure*}
  $\DNk = \log\lceil\frac{N}{k+1}\rceil$, and the dimension $\DNk$-optimal exponential families containing the uniform
  distribution are partition models.
\end{conjure*}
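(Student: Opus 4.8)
The plan is to prove the two halves separately, handling the easy upper bound by exhibiting partition models and concentrating the effort on the matching lower bound and the characterization of optimizers.

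First I would establish the upper bound $\DNk\le\log\lceil\frac{N}{k+1}\rceil$ by exhibiting a partition model that attains it. Let $\Xcal=\Xcal^{1}\sqcup\dots\sqcup\Xcal^{N'}$ be a partition into $N'$ blocks, with associated partition model $\Expfam$. This is an exponential family of dimension $N'-1$ with uniform reference measure, whose tangent space is spanned by the block indicators; in particular $\Expfam\in\expfamuniref$. A direct computation of the rI-projection (matching the block masses of $P$ and making the conditionals uniform) gives
\begin{equation*}
  \DE(P)=\sum_{i=1}^{N'}P(\Xcal^{i})\bigl(\log|\Xcal^{i}|-H_{i}(P)\bigr),
\end{equation*}
where $H_{i}(P)$ denotes the entropy of $P$ conditioned on $\Xcal^{i}$. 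Since $0\le H_{i}(P)\le\log|\Xcal^{i}|$, the right-hand side is maximized by a point mass inside a largest block, so $\max\DE=\log\max_{i}|\Xcal^{i}|$. Choosing $N'=k+1$ and a balanced partition minimizes $\max_{i}|\Xcal^{i}|=\lceil\frac{N}{k+1}\rceil$, which yields the upper bound and shows the claimed value is attained by a partition model.

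The substance of the conjecture is the matching lower bound: every exponential family $\Expfam$ on $\cardN$ with $\dim\Expfam=k$ satisfies $\max\DE\ge\log\lceil\frac{N}{k+1}\rceil$. The one clean mechanism I would start from is a fiber argument, in the relevant case of uniform reference. Writing $\Expfam$ with sufficient statistic $\suffstat\colon\Xcal\to\Rb^{k}$, any two states $x,y$ with $\suffstat(x)=\suffstat(y)$ receive equal mass under every $Q\in\Expfam$, and this persists in the closure $\ol\Expfam$. Hence a fiber of size $m$ forces $\sup_{Q\in\ol\Expfam}Q(x)\le\frac1m$ for each $x$ in the fiber, and therefore $\DE(\delta_{x})=-\log\sup_{Q}Q(x)\ge\log m$. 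Partition models are exactly the families whose fibers are the blocks, so this reproduces the upper bound with equality and proves the lower bound whenever $\Expfam$ has a fiber of size $\ge\lceil\frac{N}{k+1}\rceil$.

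The main obstacle is that a generic $k$-dimensional family has an \emph{injective} sufficient statistic, so all fibers are singletons and the Dirac bound degenerates to $0$; for such families the hard-to-approximate distributions are not point masses but spread-out measures. I would therefore attack the general case variationally: first argue, by compactness of the Grassmannian of $(k+1)$-dimensional tangent spaces containing the constants (with due care for the semicontinuity of $\Expfam\mapsto\max\DE$ on its boundary, and for a preliminary reduction from arbitrary to uniform reference measure), that the minimum defining $\DNk$ is attained; then show that at a minimizer the tangent space must be spanned by indicator functions. This second step is the technical heart: one must prove that if the tangent space of a candidate minimizer is \emph{not} of partition type, a first-order perturbation strictly decreases $\max\DE$. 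This is a minimax sensitivity analysis—$\max\DE$ is an outer maximum over $P$ of an inner rI-projection over $Q$—whose rigorous version needs the optimality conditions for maximizers of $\DE$ (in particular that a maximizer has support of size at most $k+1$ and matches the moments of its projection). I expect this perturbation analysis, together with pinning down the \emph{exact} constant $\lceil\frac{N}{k+1}\rceil$ rather than a weaker bound, to carry essentially all the difficulty.

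Finally, the characterization of the optimizers would follow from the equality case of the same analysis: forcing $\max\DE=\log\lceil\frac{N}{k+1}\rceil$ within $\expfamuniref$ should leave no admissible non-indicator direction in the tangent space, so the family must be a partition model with a balanced partition. As a tractable anchor and likely base case for an induction on $N$ and $k$, I would first settle the regime $\lceil\frac{N}{k+1}\rceil=2$, i.e. the threshold $D=\log2$: here the statement reduces to showing that a proper exponential family of the appropriate dimension cannot approximate every distribution in $\spmsX$ to within less than $\log2$, the phenomenon already flagged in the abstract, which should be provable by a direct two-point argument.
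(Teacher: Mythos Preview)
The statement you are attempting to prove is, in this paper, a \emph{conjecture}, not a theorem: the paper says explicitly that ``the value of $\DNk$ is unknown when $k+1$ does not divide $N$.'' Your upper bound via balanced partition models is correct and matches the paper's computation of $\max\DE$ for partition models. But your proposed lower bound is not a proof.

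The paper does establish the partial lower bound $\DNk\ge\log(N/(k+1))$ (without the ceiling), and its method is different from your fiber argument: it applies Carath\'eodory's theorem in the convex support to write the moment-map image of the uniform distribution as a convex combination of at most $k+1$ vertices $A_{x_0},\dots,A_{x_k}$, obtaining a measure $P$ supported on $\le k+1$ points with $\projPE=\frac1N\unimeasure$ and hence $\DE(P)=\log N-H(P)\ge\log N-\log(k+1)$. This settles the conjecture when $k+1$ divides $N$; the equality analysis there (forcing all weights to be $\frac1{k+1}$ and every column $A_x$ to coincide with one of the $A_{x_i}$) gives the partition-model characterization in that case, and a separate convexity argument shows that non-uniform reference measures do strictly worse. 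The case $\lceil N/(k+1)\rceil=2$ is handled by an entirely different route, via a circuit-vector analysis of the normal space, not by a ``direct two-point argument.''

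Your fiber argument, as you yourself note, collapses for generic families with injective sufficient statistics. Your proposed remedy---compactness of the Grassmannian plus a first-order perturbation showing that any non-partition minimizer can be improved---is exactly where the open problem lives, and your outline does not carry it out. Two concrete obstructions: (i) $\max\DE$ is a maximum over $P$ of a non-smooth function of the tangent space, and for homogeneous partition models \emph{every} $Q\in\ol\Expfam$ is the $rI$-projection of some global maximizer, so envelope-type differentiation of $\max\DE$ in the tangent direction is delicate and need not yield a usable descent direction; (ii) even granting existence of a minimizer and some structural rigidity, nothing in your sketch explains how the \emph{ceiling} $\lceil N/(k+1)\rceil$ rather than $N/(k+1)$ would emerge---this integer gap is precisely the content that remains unproved in the paper. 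In short, your proposal correctly locates the difficulty but does not resolve it; the paper treats the general statement as open.
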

The results in Section~\ref{sec:maxDdE=log2} show that the conjecture is true if $\lceil\frac{N}{k+1}\rceil\le 2$, and
Theorem~\ref{thm:logNk-optimality} proves the conjecture if $k+1$ divides $N$.

This paper is organized as follows: Section~\ref{sec:prelim} collects the necessary preliminaries about exponential
families and the information divergence.  Section~\ref{sec:partmod} introduces partition models and studies their basic
properties.  $\log(2)$-optimal exponential families $\Expfam$ are studied in Section~\ref{sec:maxDdE=log2}.
Section~\ref{sec:opti} presents results on $D$-optimal exponential families for arbitrary $D$.

\section{Preliminaries}
\label{sec:prelim}

This section collects known facts that are needed in later sections.  It starts with some notions from matroid theory
before defining exponential families, the information divergence and hierarchical models.  The last part discusses the
function $\DbarE$, which arises naturally when studying the maximizers of $\DE$.

\subsection{Circuits}
\label{ssec:prel:matroids}

This section recalls some elementary notions from the theory of matroids.  Only representable matroids will play a role,
but nevertheless the language of abstract matroids is useful. See~\cite{Oxley92:Matroid_Theory} for an introduction.

\begin{defi}
  Let $\normspace$ be a linear subspace of $\Rb^{\Xcal}$.  The \emph{support} of $u\in\normspace$ is defined as
  $\supp(u):=\{x\in\Xcal:u(x)\neq 0\}$.  A vector $v\in\normspace\setminus\{0\}$ is called a \emph{circuit vector} if
  and only if for any $u\in\normspace$ satisfying $\supp(u)\subseteq\supp(v)$ there exists $\alpha\in\Rb$ such that $u =
  \alpha v$.  In other words, circuit vectors are vectors with minimal support.  The support $\supp(u)$ of a circuit
  vector $u$ is called a \emph{circuit}.
  A finite set $\Ccal\subseteq\normspace$ is a \emph{circuit basis} if and only if the map $u\in\Ccal\mapsto\supp(u)$ is
  injective and maps onto the set of circuits.
\end{defi}
\begin{lemma}
  \label{lem:circuitinvector}
  For every nonzero vector $u\in\normspace$ and any $x\in\Xcal$ such that $u(x)\neq 0$ there exists a
  circuit vector $c\in\normspace$ such that $\supp(c)\subseteq\supp(u)$ and $c(x)\neq 0$.
\end{lemma}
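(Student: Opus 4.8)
The plan is to construct the desired circuit vector by a support-minimization argument, essentially the standard linear-algebraic fact that every nonzero vector is supported on a union of circuits, but carried out so as to retain the distinguished coordinate $x$. First I would introduce the auxiliary set
\[
\mathcal{S} = \Set{v \in \normspace : v \neq 0,\ \supp(v) \subseteq \supp(u),\ v(x) \neq 0},
\]
which is nonempty because $u$ itself lies in it. Since every support is a subset of the finite set $\Xcal$, the collection of supports occurring in $\mathcal{S}$ has a minimal element with respect to inclusion, and I would fix a vector $c \in \mathcal{S}$ realizing such a minimal support. By construction $c$ already satisfies $\supp(c) \subseteq \supp(u)$ and $c(x) \neq 0$, so everything reduces to verifying that $c$ is in fact a circuit vector.

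The crux is to upgrade ``minimality within $\mathcal{S}$'' to ``minimality among all nonzero vectors of $\normspace$.'' I would argue by contradiction: suppose some $w \in \normspace \setminus \{0\}$ has $\supp(w) \subsetneq \supp(c)$. If $w(x) \neq 0$, then $w$ itself belongs to $\mathcal{S}$ with strictly smaller support, contradicting the choice of $c$. The remaining case $w(x) = 0$ is the genuine obstacle, since such a $w$ does not compete in $\mathcal{S}$ directly; here I would eliminate one coordinate, choosing $y \in \supp(w)$ and setting $\alpha = c(y)/w(y)$, so that $c - \alpha w$ vanishes at $y$ and therefore has support strictly inside $\supp(c)$, while $(c - \alpha w)(x) = c(x) \neq 0$ keeps it in $\mathcal{S}$. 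This again contradicts minimality, so no nonzero $w$ with $\supp(w) \subsetneq \supp(c)$ exists.

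It then remains to translate this minimality of $\supp(c)$ into the defining property of a circuit vector. Given any $w \in \normspace$ with $\supp(w) \subseteq \supp(c)$, either $w = 0 = 0 \cdot c$, or minimality forces $\supp(w) = \supp(c)$; in the latter case, subtracting the multiple $\alpha c$ with $\alpha = w(y)/c(y)$ for some $y \in \supp(c)$ yields a vector of strictly smaller support inside $\supp(c)$, which by minimality must vanish, so $w = \alpha c$. Hence $c$ is a circuit vector with the required properties. I expect the only delicate point to be the $w(x)=0$ case just described: without the elimination step $c - \alpha w$ one cannot rule out the existence of smaller supports that simply avoid $x$, and it is precisely this step that forces the minimal support in $\mathcal{S}$ to be an honest circuit rather than merely a smallest support meeting $x$.
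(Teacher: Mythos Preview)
Your argument is correct and follows essentially the same approach as the paper: both pick a vector $c$ of inclusion-minimal support among those in $\normspace$ with $\supp(c)\subseteq\supp(u)$ and $c(x)\neq 0$, and then use a linear combination to contradict minimality if $c$ fails to be a circuit vector. The paper compresses the middle step by invoking directly that a non-circuit vector contains a circuit in its support, whereas you spell out the elimination step $c-\alpha w$ explicitly and then verify the circuit-vector condition; the content is the same.
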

\begin{proof}
  Let $c$ be a vector with inclusion-minimal support that
  satisfies $\supp(c)\subseteq\supp(u)$ and $c(x)\neq 0$.  If $c$ is not a circuit vector, then there exists a circuit
  vector $c'$ with $\supp(c')\subset\supp(c)$.  A suitable linear combination $c + \alpha c'$, $\alpha\in\mathbb{R}$
  gives a contradiction to the minimality of $c$.
\end{proof}
It follows that any circuit basis of $\normspace$ contains a spanning set.

\subsection{Exponential families and the information divergence}
\label{ssec:prel:expfams}

In this work only exponential families on a finite set $\Xcal$ are studied, for the information divergence from a
finite-dimensional exponential family on an infinite set is usually unbounded, cf.~Theorem~\ref{thm:logNk-optimality}.
See~\cite{Brown86:Fundamentals_of_Exponential_Families} and~\cite{CsiszarShields04:Information_Theory_and_Statistics}
for an introduction to exponential families and the information divergence.

Let $\exttangspace$ be a linear subspace of $\Rb^{\Xcal}$ containing the constant function, and let $\refmeasure$ be a
strictly positive measure on $\Xcal$.  The set $\Expfam=\Expfam_{\nu,\exttangspace}$ of all probability measures on
$\Xcal$ of the form
\begin{equation}
  \label{eq:expfam}
  P_{\vartheta}(x) = \frac{\refmeasure(x)}{Z_{\vartheta}} e^{\vartheta(x)}
\end{equation}
is called an \emph{exponential family}.  $\refmeasure$ is a \emph{reference measure}, and $\exttangspace$ will be called
the \emph{extended tangent space} of $\Expfam$.  The extended tangent space carries its name since its image modulo the
constant functions is isomorphic to the tangent space of the manifold $\Expfam$ at any point.  The orthogonal complement
$\normspace := \exttangspace^{\perp}$ will be called the \emph{normal space} of $\Expfam$.  The normal space is
orthogonal to the tangent space of $\Expfam$ at any point $P\in\Expfam$ with respect to the Fisher metric at $P$.  The
topological closure of $\Ecal$ will be denoted by $\ol \Ecal$.

The exponential family $\Expfam_{\nu,\exttangspace}$ can be parametrized as follows:
If $a_{1},\dots,a_{h}\in\Rb^{\Xcal}$ form a spanning set of $\exttangspace$, then $\Expfam$ consists of all probability
distributions of the form
\begin{equation}
  \label{eq:expfam-A}
  P_{\theta}(x) = \frac{\refmeasure(x)}{Z_{\theta}} \exp\left(\sum_{i=1}^{h}\theta_{i}a_{i}(x)\right).
\end{equation}
In this formula $\theta\in\Rb^{h}$ is a vector of parameters and $Z_{\theta}$ ensures normalization.  The matrix $A =
(a_{i}(x))_{i,x}\in\Rb^{h\times\Xcal}$ is called a \emph{sufficient statistics} of~$\Ecal$.  The linear map
corresponding to $A$ is called the \emph{moment map}, denoted by $\mommap$.  The columns of $A$ will be denoted by
$A_{x},x\in\Xcal$.  The normal space of $\Ecal$ equals $\normspace=\{u\in\ker A:\sum_{x}u(x)=0\}$.
The convex hull of $\{A_{x}:x\in\Xcal\}$ is a polytope called the \emph{convex support} $\csA$ of $\Expfam$.  This
polytope is independent of the choice of $\suffstat$ up to an affine transformation.

Any function $u\in\Rb^{\Xcal}$ can be decomposed uniquely as a difference $u=u^{+}-u^{-}$ of non-negative functions such
that $\supp(u^{+})\cap\supp(u^{-}) = \emptyset$.  The following implicit description of an exponential family is useful
in many contexts.
\begin{thm}
  \label{thm:impl-circuit-eqs}
  Let $\Expfam$ be an exponential family with normal space $\normspace$ and reference measure~$\refmeasure$, and let
  $\Ccal$ be a circuit basis of~$\normspace$.  A probability measure $P$ on $\Xcal$ belongs to $\ol\Expfam$ if and only
  if $P$ satisfies
  \begin{equation}
    \label{eq:impl-circuit-eqs}
    \prod_{x\in\Xcal}\left(\frac{P(x)}{\nu(x)}\right)^{u^{+}(x)} = \prod_{x\in\Xcal}\left(\frac{P(x)}{\nu(x)}\right)^{u^{-}(x)}, \qquad \text{ for all }  u=u^{+}-u^{-} \in \Ccal.
  \end{equation}
\end{thm}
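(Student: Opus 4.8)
The plan is to prove the equivalence $P\in\ol\Expfam\Leftrightarrow$ the circuit equations~\eqref{eq:impl-circuit-eqs} hold, reducing everything to a clean full-support computation.

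\textbf{From membership to the equations.} First I would verify the equations on $\Expfam$ itself. For $P\in\Expfam$ of the form~\eqref{eq:expfam} we have $\log(P/\refmeasure)=\vartheta-\log Z_{\vartheta}\in\exttangspace$, since both $\vartheta$ and the constants lie in $\exttangspace$. As $\normspace=\exttangspace^{\perp}$, every $u\in\Ccal\subseteq\normspace$ satisfies $\sum_{x}u(x)\log(P(x)/\refmeasure(x))=0$; splitting $u=u^{+}-u^{-}$ and exponentiating gives exactly~\eqref{eq:impl-circuit-eqs}. To pass to $\ol\Expfam$ I observe that, with the conventions $0^{0}=1$ and $0^{c}=0$ for $c>0$, each side of~\eqref{eq:impl-circuit-eqs} is a continuous function of $P$ on the simplex $\pmsX$; hence~\eqref{eq:impl-circuit-eqs} defines a closed subset of $\pmsX$ containing $\Expfam$, and therefore containing $\ol\Expfam$.

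\textbf{The full-support case of the converse.} Suppose $P$ satisfies~\eqref{eq:impl-circuit-eqs} and $\supp(P)=\Xcal$. Then each $P(x)/\refmeasure(x)$ is positive, and taking logarithms turns~\eqref{eq:impl-circuit-eqs} into $\sum_{x}u(x)\log(P(x)/\refmeasure(x))=0$ for every $u\in\Ccal$. Because a circuit basis contains a spanning set of $\normspace$, this says $\log(P/\refmeasure)\perp\normspace$, i.e.\ $\log(P/\refmeasure)\in\normspace^{\perp}=\exttangspace$. Writing this element as some $\vartheta\in\exttangspace$ exhibits $P$ in the form~\eqref{eq:expfam}, so $P\in\Expfam$. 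This is the forward computation run backwards, and it serves as the base case.

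\textbf{The general converse.} Now let $P$ satisfy~\eqref{eq:impl-circuit-eqs} with $\Ycal:=\supp(P)$ possibly proper, and $\Zcal:=\Xcal\setminus\Ycal$. I would split the work in two. Let $\Expfam_{\Ycal}$ be the exponential family on $\Ycal$ with reference measure $\refmeasure|_{\Ycal}$ and extended tangent space $\exttangspace|_{\Ycal}$; its normal space is $\normspace_{\Ycal}=\{u|_{\Ycal}:u\in\normspace,\ \supp(u)\subseteq\Ycal\}$, so (this being the matroid deletion to $\Ycal$) the restrictions $\{u|_{\Ycal}:u\in\Ccal,\ \supp(u)\subseteq\Ycal\}$ form a circuit basis of $\normspace_{\Ycal}$. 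For these circuits~\eqref{eq:impl-circuit-eqs} is inherited from $P$, and $P|_{\Ycal}$ has full support on $\Ycal$, so the full-support case yields $P|_{\Ycal}\in\Expfam_{\Ycal}$. It then remains to show $\Expfam_{\Ycal}\subseteq\ol\Expfam$, which finishes the proof since $P$ is $P|_{\Ycal}$ extended by zeros. For this it suffices to produce a function $v\in\exttangspace$ with $v|_{\Ycal}=0$ and $v|_{\Zcal}<0$: given $Q\in\Expfam_{\Ycal}$ lifted to an exponent $\vartheta_{0}\in\exttangspace$, the curve $P_{t}\in\Expfam$ with exponent $\vartheta_{0}+t\,v$ satisfies $P_{t}\to Q$ as $t\to\infty$, because the weights on $\Zcal$ decay to zero while those on $\Ycal$ are unchanged.

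\textbf{The main obstacle.} The crux is the existence of this separating $v$, equivalently that $\Ycal$ indexes a face of the convex support $\csA$, and this is exactly where~\eqref{eq:impl-circuit-eqs} is used combinatorially. If some circuit $u\in\Ccal$ had $\supp(u^{-})\subseteq\Ycal$ but $\supp(u^{+})\cap\Zcal\neq\emptyset$, then the left-hand side of~\eqref{eq:impl-circuit-eqs} would vanish while the right-hand side stays strictly positive, a contradiction; hence no circuit is ``one-sided'' across $\Ycal$ and $\Zcal$. To turn this into $v$ I would apply Farkas' lemma to the system $\langle\phi,A_{x}\rangle=0\ (x\in\Ycal)$, $\langle\phi,A_{x}\rangle\le -1\ (x\in\Zcal)$: infeasibility would produce $c\in\normspace$ (an affine dependence $\sum_{x}c(x)A_{x}=0$) with $c\ge 0$ on $\Zcal$ and $c(x_{0})>0$ for some $x_{0}\in\Zcal$; decomposing $c$ conformally into circuit vectors, one piece is a circuit with negative part inside $\Ycal$ and positive part meeting $\Zcal$ at $x_{0}$, contradicting the one-sidedness ban. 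Thus the system is feasible and $v(x):=\langle\phi,A_{x}\rangle$ is the desired element of $\exttangspace$. I expect the conformal circuit decomposition together with the correct Farkas alternative to be the only genuinely delicate point; the rest is the log-linear algebra and the limiting argument above.
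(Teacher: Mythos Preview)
Your argument is correct and self-contained, whereas the paper does not prove this statement at all: it simply cites \cite[Theorem~10]{RKA10:Support_Sets_and_Or_Mat}. So there is nothing to compare line by line; instead let me confirm that your outline goes through. The forward direction and the full-support converse are exactly the log-linear computation you describe, using that a circuit basis spans~$\normspace$. For the boundary case you correctly identify the real issue as showing that $\Ycal=\supp(P)$ is facial in $\csA$. Your ``one-sidedness'' observation is the right combinatorial consequence of~\eqref{eq:impl-circuit-eqs}: for every circuit $u$ either $\supp(u)\subseteq\Ycal$ or both $\supp(u^{+})$ and $\supp(u^{-})$ meet $\Zcal$. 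The Farkas/Stiemke alternative you invoke is precisely that the nonexistence of $v\in\exttangspace$ with $v|_{\Ycal}=0$ and $v|_{\Zcal}<0$ is equivalent to the existence of $c\in\normspace$ with $c|_{\Zcal}\ge 0$ and $c|_{\Zcal}\neq 0$ (since $(\exttangspace\cap\Rb^{\Zcal})^{\perp}$ in $\Rb^{\Zcal}$ equals $\normspace|_{\Zcal}$). The conformal decomposition of such a $c$ into sign-compatible circuit vectors then produces a circuit with $u^{-}$ supported in $\Ycal$ and $u^{+}$ meeting $\Zcal$, contradicting the one-sidedness ban. Finally, the limit $P_{t}\to Q$ along $\vartheta_{0}+tv$ is the standard way to realise the face $\Expfam_{\Ycal}$ inside $\ol\Expfam$. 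The only places to be careful when writing this out are stating the correct form of the Stiemke alternative and quoting (or proving) the conformal circuit decomposition; both are standard, and the rest is as routine as you indicate.
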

\begin{proof}
  See \cite[Theorem 10]{RKA10:Support_Sets_and_Or_Mat}.
\end{proof}
  Let $\Expfam_{1},\dots,\Expfam_{c}\subseteq\pmsX$.
  The \emph{mixture} of $\Expfam_{1},\dots,\Expfam_{c}$ is the set of probability measures
  \begin{equation*}
    \Set{
      P = \sum_{i=1}^{c}\lambda_{i}P_{i} : P_{1}\in\Expfam_{1},\dots,P_{c}\in\Expfam_{c}\text{ and }
      \lambda\in\Rnn^{c},\sum_{i=1}^{c}\lambda_{i}=1
    }.
  \end{equation*}
\begin{cor}
  \label{cor:disconnected-matroid}
  Let $\Expfam$ be an exponential family with normal space $\normspace$.  Let $\Ycal\subset\Xcal$.  If every circuit
  vector $c\in\normspace$ satisfies $\supp(c)\subseteq\Ycal$ or $\supp(c)\subseteq\Xcal\setminus\Ycal$, then
  $\ol\Expfam$ equals the mixture of $\ol\Expfam\cap\pms(\Ycal)$ and $\ol\Expfam\cap\pms(\Xcal\setminus\Ycal)$.
\end{cor}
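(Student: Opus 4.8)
The plan is to use the implicit description of $\ol\Expfam$ provided by Theorem~\ref{thm:impl-circuit-eqs}. Fix a circuit basis $\Ccal$ of $\normspace$. The decisive feature of the circuit equations~\eqref{eq:impl-circuit-eqs} is that each $u\in\Ccal$ is orthogonal to the constant function $\unimeasure$, so that $\sum_{x}u^{+}(x)=\sum_{x}u^{-}(x)=:s_{u}$. Consequently, if $P$ is replaced on the set $\supp(u)$ by $cP$ for a constant $c>0$, both sides of~\eqref{eq:impl-circuit-eqs} pick up the same factor $c^{s_{u}}$, which cancels. Thus the validity of the circuit equation for $u$ depends only on $P$ restricted to $\supp(u)$, and only up to an arbitrary positive rescaling there. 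The hypothesis that $\supp(u)\subseteq\Ycal$ or $\supp(u)\subseteq\Xcal\setminus\Ycal$ for every circuit is exactly what lets these rescalings on $\Ycal$ and on $\Xcal\setminus\Ycal$ be chosen independently.

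I would then prove the two inclusions separately. For the inclusion $\supseteq$, take $P=\lambda P_{1}+(1-\lambda)P_{2}$ with $P_{1}\in\ol\Expfam\cap\pms(\Ycal)$, $P_{2}\in\ol\Expfam\cap\pms(\Xcal\setminus\Ycal)$ and $\lambda\in[0,1]$, and verify~\eqref{eq:impl-circuit-eqs} for each $u\in\Ccal$. If $\supp(u)\subseteq\Ycal$, then $P_{2}$ vanishes on $\supp(u)$, so $P=\lambda P_{1}$ there; by the rescaling remark the equation for $P$ reduces to the equation for $P_{1}$, which holds since $P_{1}\in\ol\Expfam$. The case $\supp(u)\subseteq\Xcal\setminus\Ycal$ is symmetric. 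Hence $P\in\ol\Expfam$.

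For the inclusion $\subseteq$, take $P\in\ol\Expfam$ and set $\lambda:=P(\Ycal)$. Define $P_{1}$ and $P_{2}$ as the restrictions of $P$ to $\Ycal$ and to $\Xcal\setminus\Ycal$, each renormalized to a probability measure whenever the corresponding mass is positive. I claim $P_{1}\in\ol\Expfam$, i.e.\ $P_{1}$ solves~\eqref{eq:impl-circuit-eqs} for every $u\in\Ccal$. If $\supp(u)\subseteq\Ycal$, then on $\supp(u)$ one has $P_{1}=P/\lambda$, and the rescaling remark reduces the equation for $P_{1}$ to the equation for $P$. If $\supp(u)\subseteq\Xcal\setminus\Ycal$, then $P_{1}$ vanishes on $\supp(u)$; since a circuit vector has $u^{+}\neq 0\neq u^{-}$ (because $\sum_{x}u(x)=0$), both sides of~\eqref{eq:impl-circuit-eqs} are $0$ and the equation holds trivially. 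The same reasoning gives $P_{2}\in\ol\Expfam$, and by construction $P=\lambda P_{1}+(1-\lambda)P_{2}$.

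The remaining point, and the main thing to get right, is the degenerate case $\lambda\in\{0,1\}$, which forces me to know that both $\ol\Expfam\cap\pms(\Ycal)$ and $\ol\Expfam\cap\pms(\Xcal\setminus\Ycal)$ are nonempty so that the mixture is still well defined. This I obtain from the very same argument: starting from any strictly positive $P\in\Expfam$, the renormalized restriction of $P$ to $\Ycal$ lies in $\ol\Expfam\cap\pms(\Ycal)$, and likewise for the complement. With nonemptiness in hand, $\lambda=0$ (say) is handled by writing $P=0\cdot P_{1}+1\cdot P_{2}$ for an arbitrary admissible $P_{1}$. I expect the only genuine subtlety to be this careful treatment of vanishing coordinates together with the cancellation of the rescaling factors; the structural content---that circuits confined to $\Ycal$ or its complement decouple the defining equations---is immediate once the rescaling observation is in place.
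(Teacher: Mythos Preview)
Your proposal is correct and follows essentially the same route as the paper: both arguments rest on Theorem~\ref{thm:impl-circuit-eqs} together with the observation that renormalized truncations to $\Ycal$ and to $\Xcal\setminus\Ycal$ preserve the circuit equations, since each circuit is confined to one side. The paper's proof is terser---it only states the equivalence for full-support $P\in\Expfam$ and leaves the passage to closures implicit---whereas you spell out both inclusions and the degenerate cases $\lambda\in\{0,1\}$ explicitly; but the underlying idea is identical.
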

\begin{proof}
  For any probability measure $P$ and subset $\Ycal\subseteq\Xcal$ define the \emph{truncation}
  $P^{\Ycal}$ as follows: If $P(\Ycal)>0$, then
  \begin{equation*}
    P^{\Ycal}(x) =
    \begin{cases}
      \frac{1}{P(\Ycal)}P(x), &\text{ if }x\in\Ycal,\\
      0, &\text{ else};
    \end{cases}
  \end{equation*}
  otherwise let $P^{\Ycal}$ be an arbitrary probability distribution on $\Ycal$.  By Theorem~\ref{thm:impl-circuit-eqs},
  a probability measure $P\in\pmsX$ with full support lies in $\Expfam$ if and only if its truncations $P^{\Ycal}$ and
  $P^{\Xcal\setminus\Ycal}$ lie in $\Expfam\cap\pms(\Ycal)$ and $\Expfam\cap\pms(\Xcal\setminus\Ycal)$, respectively.
\end{proof}
The corollary can be reformulated as follows, using terminology from matroid theory: If
$\Xcal_{1},\dots,\Xcal_{c}$ are the connected components of the
matroid of $\normspace$, then $\ol\Expfam$ equals the mixture of $\ol{\Expfam_{1}},\dots,\ol{\Expfam_{c}}$, where
$\Expfam_{i}=\ol\Expfam\cap\spms{\Xcal_{i}}$ is an exponential family on $\Xcal_{i}$ for $i=1,\dots,c$.

\medskip
The \emph{information divergence} (also known as the \emph{Kullback-Leibler divergence} or \emph{relative entropy}) of
positive measures $P$, $Q$ is defined as
\begin{equation}
  \label{eq:def:infdiv}
  D(P\|Q) = \sum_{x\in\Xcal}P(x)\log\left(\frac{P(x)}{Q(x)}\right).
\end{equation}
with the convention that $0\log 0 = 0\log(0/0) = 0$.  It is finite unless $\supp(P)$ is not contained in $\supp(Q)$.
If $\refmeasure$ equals the counting measure on $\Xcal$ (i.e.~$\refmeasurex=1$ for all $x$), then $\ID{P}\refmeasure$
equals minus the Shannon entropy $H(P)$.  If $P$ and $Q$ are probability measures, then $\ID PQ$ is strictly positive
unless $P=Q$.

Let $\Ecal$ be an exponential family.  For any probability measure $P$ on $\Xcal$ there is a unique probability
distribution $\projPE\in\ol\Ecal$ such that $D(P\|\projPE) = \inf_{Q\in\Ecal}D(P\|Q)$, see
\cite{CsiszarMatus08:GMLE_for_Exp_Fam}.  The measure $\projPE$ is called the \emph{(generalized) $rI$-projection} of $P$
to $\Expfam$ or the \emph{(generalized) MLE}.  It can also be characterized as the unique probability measure
$\projPE\in\ol\Ecal$ such that $P - \projPE\in\normspace$.  Alternatively, $\projPE$ minimizes the function $\ID
Q\refmeasure$ on $\{Q\in\Pcal(\Xcal) : P-Q\in\normspace\}$.  In particular, if $\refmeasure$ is the counting measure,
then $\projPE$ maximizes the entropy.

\subsection{Hierarchical loglinear models}
\label{ssec:prel:hiermod}

Let $\Xcal_{1},\dots,\Xcal_{\numvar}$ be finite sets of cardinality $|\Xcal_{i}|=N_{i}$, and let
$\Xcal=\Xcal_{1}\times\dots\times\Xcal_{\numvar}$.  For any subset $S\subseteq\cardn$ let $\Xcal_{S}=\times_{i\in
  S}\Xcal_{i}$.
The restrictions $X_{i}:\Xcal\to\Xcal_{i}$ to the subsystems can be viewed as random variables, and hierarchical models
can be used to study the relationship of these discrete random variables.  This section summarizes the main facts which
are needed in the following.  See~\cite{Lauritzen96:Graphical_Models}
and~\cite{DrtonSturmfelsSullivant09:Algebraic_Statistics} for further information.

\begin{defi}
  For any family $\Delta$ of subsets of $\cardn$ let
  $\Expfam'_{\Delta}$ be the set of all probability measures $P\in\spmsX$ that can be written in the form
  \begin{equation}
    \label{eq:hierarchical-product}
    P(x) = \prod_{S\in\Delta} f_{S}(x),
  \end{equation}
  where each $f_{S}$ is a non-negative function on $\Xcal$ that depends only on those components of $x$ lying in $S$.
  In other words, $f_{S}(x) = f_{S}(y)$ for all $x=(x_{i})_{i=1}^{\numvar},y=(y_{i})_{i=1}^{\numvar}\in\Xcal$ satisfying
  $x_{i}=y_{i}$ for all $i\in S$.
  The \emph{hierarchical exponential family}
  $\Expfam_{\Delta}$ of $\Delta$
  with parameters $N_{1}$, $N_{2}$, \dots, $N_{\numvar}$ is defined as $\Expfam'_{\Delta}\cap\spmsX$.  The closure of
  $\Expfam_{\Delta}$ (which equals the closure of $\Expfam'_{\Delta}$) is called the \emph{hierarchical
    model} of $\Delta$ with parameters $N_{1}$, $N_{2}$, \dots, $N_{\numvar}$.
\end{defi}
At first sight one might think that $\Expfam'_{\Delta}=\ol{\Expfam_{\Delta}}$.  Unfortunately, this is not true,
see~\cite{GeigerMeekSturmfels06:Toric_Algebra_Graphical_Models}.  For certain applications, when the factorizability
probability is important, one might want to call $\Expfam'_{\Delta}$ a hierarchical model.  When studying
optimization problems it is more important that the models are closed.

For any $S\subseteq\{1,\dots,\numvar\}$ the subset of $\Rb^{\Xcal}$ of functions that only depend on the $S$-components
can be naturally identified with $\Rb^{\Xcal_{S}}$.  The projection $\Xcal\to\Xcal_{S}$ induces a natural injection
$\Rb^{\Xcal_{S}}\to\Rb^{\Xcal}$.

It is easy to see that hierarchical exponential families are indeed exponential families: Namely,
\eqref{eq:hierarchical-product} implies that $\Expfam_{\Delta}$ consists of all
$P\in\spmsX$ that satisfy
\begin{equation*}
  \left(\log(P(x))\right)_{x\in\Xcal} \in \sum_{S\in\Delta}\Rb^{\Xcal_{S}}\subseteq\Rb^{\Xcal}.
\end{equation*}
Therefore, $\Expfam_{\Delta}$ is an exponential family with uniform reference measure and extended tangent space
$\tilde\tangspace = \sum_{S\in\Delta}\Rb^{\Xcal_{S}}$.  This vector space sum is not direct, since every summand
contains $\unimeasure$.  There is a natural sufficient statistics: The marginalization maps
$\pi_{S}:\Rb^{\Xcal}\mapsto\Rb^{\Xcal_{S}}$ defined for $S\subseteq\{1,\dots,n\}$ via
\begin{equation*}
  \pi_{S}(v) (x) = \sum_{y\in\Xcal: y_{i}=x_{i}\text{ for all }i\in S} v(y)
\end{equation*}
induce the moment map
\begin{equation*}
  \pi_{\Delta}: v\in\Rb^{\Xcal}\mapsto (\pi_S(v))_{S\in\Delta}\in\bigoplus_{S\in\Delta}\Rb^{\Xcal_{S}},
\end{equation*}
where $\oplus$ denotes the (external) direct sum of vector spaces.
\begin{lemma}
  \label{lem:csA-of-hierarchical}
  Let $\Delta$ be a collection of subsets of $\cardn$, and let $K=\cup_{J\in\Delta}J$.  The marginal polytope of
  $\Delta$ is (affinely equivalent to) a 0-1-polytope with $\prod_{i\in K}N_{i}$
  vertices.
\end{lemma}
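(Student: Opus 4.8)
The plan is to work with the explicit sufficient statistics already provided by the moment map $\pi_{\Delta}$ and simply read off its columns. Recall that the convex support $\csA$ is the convex hull of the columns $A_{x}$, $x\in\Xcal$, where $A_{x}=\pi_{\Delta}(e_{x})$ and $e_{x}\in\Rb^{\Xcal}$ is the indicator function of the singleton $\{x\}$. Since $\csA$ is only defined up to an affine transformation, it suffices to exhibit one choice of statistics for which it is a $0$-$1$-polytope, and $\pi_{\Delta}$ is the natural candidate. First I would compute a single block of $A_{x}$: for $S\in\Delta$ and $y\in\Xcal_{S}$ the definition of $\pi_{S}$ gives $\pi_{S}(e_{x})(y)=\sum_{z:\,z_{i}=y_{i}\ \forall i\in S}e_{x}(z)$, which equals $1$ exactly when $x_{i}=y_{i}$ for all $i\in S$ and $0$ otherwise. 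Thus $\pi_{S}(e_{x})$ is the indicator in $\Rb^{\Xcal_{S}}$ of the projected point $x_{S}:=(x_{i})_{i\in S}$, and $A_{x}=\bigl(\pi_{S}(e_{x})\bigr)_{S\in\Delta}$ is a $0$-$1$ vector in $\bigoplus_{S\in\Delta}\Rb^{\Xcal_{S}}$ carrying exactly one entry $1$ in each block. Consequently $\csA$ is the convex hull of a set of $0$-$1$ vectors, i.e.\ a $0$-$1$-polytope.

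Next I would count the distinct columns and identify them with the vertices. From the previous computation, $A_{x}=A_{x'}$ holds if and only if $x_{S}=x'_{S}$ for every $S\in\Delta$, which is equivalent to $x_{i}=x'_{i}$ for all $i$ in the union $K=\cup_{J\in\Delta}J$. Hence the map $x\mapsto A_{x}$ factors through the projection $\Xcal\to\Xcal_{K}$ and takes exactly $|\Xcal_{K}|=\prod_{i\in K}N_{i}$ distinct values. To conclude that each such value is a vertex, I would invoke the standard fact that a $0$-$1$ point lying in a polytope contained in the cube $[0,1]^{m}$ (here $m=\sum_{S\in\Delta}\prod_{i\in S}N_{i}$) is automatically a vertex: it is an extreme point of the cube, so it cannot be a nontrivial convex combination of other points of the polytope. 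Every vertex of $\csA$ is among the $A_{x}$, so the vertex set is precisely the set of distinct columns, and its cardinality is $\prod_{i\in K}N_{i}$.

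There is no serious obstacle here; the argument is essentially a direct unwinding of the definition of $\pi_{\Delta}$ together with two elementary polytope observations. The only points that require genuine care are (a) recognizing that coincidences $A_{x}=A_{x'}$ are governed by equality of the $K$-coordinates rather than the full coordinates, so that the count is $\prod_{i\in K}N_{i}$ and not $\prod_{i=1}^{\numvar}N_{i}$, and (b) the remark that "affinely equivalent to" is exactly what lets us replace any sufficient statistics by $\pi_{\Delta}$ without loss of generality. Both are straightforward, so the statement follows once the column computation is in place.
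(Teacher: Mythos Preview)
Your proposal is correct and follows essentially the same approach as the paper's proof: both use the explicit sufficient statistics given by $\pi_{\Delta}$, observe that the columns are $0$-$1$ vectors, show that $A_{x}=A_{x'}$ if and only if $x$ and $x'$ agree on the $K$-coordinates, and conclude by noting that a $0$-$1$ vector cannot be a nontrivial convex combination of other points of the polytope. Your write-up is slightly more explicit in computing $\pi_{S}(e_{x})$ coordinatewise, but the logical structure is identical.
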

\begin{proof}
  The moment map $\pi_{\Delta}$ corresponds to a sufficient statistics $\suffstat_{\Delta}$ that only has entries $0$
  and $1$, so $\csA$ is a 0-1-polytope.  The set of vertices of $\csA$ is a subset of $\{\suffstat_{x}:x\in\Xcal\}$.
  Let $x=(x_{i})_{i=1}^{\numvar},y=(y_{i})_{i=1}^{\numvar}\in\Xcal$.  If $x_{i}=y_{i}$ for all $i\in K$, then
  $\suffstat_{x}=\suffstat_{y}$, so $\csA$ has at most $\prod_{i\in K}N_{i}$ vertices.  If $x_{i}\neq y_{i}$ for some
  $i\in K$, then $\suffstat_{x}\neq\suffstat_{y}$, so the set $\{\suffstat_{x}:x\in\Xcal\}$ has cardinality
  $\prod_{i\in K}N_{i}$.  Since this set consists of 0-1-vectors and since no 0-1-vector is a convex combination of
  other 0-1-vectors, it follows that the set of vertices of $\csA$ equals $\{\suffstat_{x}:x\in\Xcal\}$ and has
  cardinality $\prod_{i\in K}N_{i}$.
\end{proof}

\subsection[The function D-bar]{The function $\DbarE$}
\label{ssec:prel:DbarE}

The function $\DE$ is related to the function
\begin{equation*}
  \DbarE(u) = \sum_{x\in X}u(x)\log\frac{|u(x)|}{\refmeasurex}
\end{equation*}
defined on $\normspace$~\cite{Rauh11:Finding_Maximizers}.  The function $\DbarE$ satisfies $\DbarE(\alpha u) =
\alpha\DbarE$ for all $\alpha\in\Rb$ and $u\in\normspace$.  It will mostly be considered on a subset $\bdKtopA$ of
$\normspace$, defined as follows:

\begin{defi}
  For any $v\in\Rb^{\Xcal}$ and $\Zcal\subseteq\Xcal$ write $v(\Zcal):=\sum_{x\in\Zcal}v(x)$.  Let
  \begin{equation*}
    \bdKtopA:=\left\{ u\in\normspace : u^{+}(\Xcal)= u^{-}(\Xcal)= 1 \right\}.
  \end{equation*}
  The map $\pospartmap:u\mapsto u^{+}$ maps $\bdKtopA$ to a subset of $\pmsX$.  A probability distribution in the image
  of $\pospartmap$ is called a \emph{kernel distribution}.

  In the other direction there is the natural map $\PsiE:\pmsX\setminus\ol\Expfam\to\normspace$, defined via
  \begin{equation*}
    \PsiE(P) = \frac{P - \projPE}{(P - \projPE)^{+}(\Xcal)}.
  \end{equation*}
  The denominator makes sure that the image of $\PsiE$ lies in $\bdKtopA$.  Since $P=\projPE$ if and only if
  $P\in\ol\Expfam$, the map is well-defined on $\pmsX\setminus\ol\Expfam$.
\end{defi}
\begin{thm}
  \label{thm:Dualmaxi-glob}
  Let $\Expfam$ be an exponential family with normal space $\normspace\neq 0$.  The map $\PsiE$ restricts to a bijection
  from the set of local maximizers of $\DE$ to the set of local maximizers of $\DbarE$.  An inverse is given by the
  restriction of the map $\pospartmap:u\mapsto u^{+}$.  If $P\in\pmsX$ and $u\in\bdKtopA$ are local maximizers of $\DE$
  and $\DbarE$, respectively, then
  \begin{equation*}
    \DE(P) = \log(1 + \exp(\DbarE(\PsiE(P))))\text{ and }\DE(u^{+}) = \log(1 + \exp(\DbarE(u))).
  \end{equation*}
\end{thm}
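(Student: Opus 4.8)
The plan is to reduce both optimization problems to their first-order (Karush--Kuhn--Tucker) conditions and to check that these conditions are carried into one another by $\PsiE$ and $\pospartmap$; the value formula then drops out of a short computation using $\normspace\perp\exttangspace$. The computational backbone is the gradient of $\DE$ on the simplex. Writing $\bar P:=\projPE$, the first thing I would prove is that for every tangent direction $V$ to $\pmsX$ (that is, $V(\Xcal)=0$) one has $\langle\nabla\DE(P),V\rangle=\sum_{x}V(x)\log\frac{P(x)}{\bar P(x)}$. The only subtlety is that $\bar P$ itself depends on $P$; but $\dot{\bar P}/\bar P$ lies in $\exttangspace$ modulo constants while $P-\bar P\in\normspace$, so the contribution of the moving projection cancels, exactly as in the characterization $P-\projPE\in\normspace$ recalled in Section~\ref{ssec:prel:expfams}.

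From a local maximizer $P$ of $\DE$ I would then read off the structure. The KKT conditions on the simplex force $x\mapsto\log\frac{P(x)}{\bar P(x)}$ to be constant on $\supp(P)$ and no larger off it; hence $\bar P=(1-c)P$ on $\supp(P)$ for some $c\in(0,1)$ (strict, since $\DE(P)>0$ forces $\bar P$ to carry mass outside $\supp P$). Consequently $P>\bar P$ precisely on $\supp P$, so $(P-\bar P)^{+}=cP$ and $u:=\PsiE(P)$ satisfies $u^{+}=\pospartmap(u)=P$; moreover $\bar P=(1-c)u^{+}+c\,u^{-}$. Since $\bar P\in\ol\Expfam$, on $\supp\bar P=\supp u$ we have $\log(\bar P/\refmeasure)\in\exttangspace$, so both $u^{+}$ and $u^{-}$ are proportional to $\refmeasure e^{\theta}$ for one common $\theta\in\exttangspace$. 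This is precisely the first-order condition for $u$ to be critical for $\DbarE$ on $\bdKtopA=\{u\in\normspace:\|u\|_{1}=2\}$, obtained from $\nabla\DbarE(u)_{x}=\log\frac{|u(x)|}{\refmeasurex}+1$ together with the Lagrange condition $\nabla\DbarE(u)\in\exttangspace+\Rb\,\mathrm{sign}(u)$.

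For the converse I would reverse these steps: from a $\DbarE$-critical $u$ the common-$\theta$ form of $u^{\pm}$ lets me define $\bar P:=(1-c)u^{+}+c\,u^{-}$ (with $c$ fixed by matching the two proportionality constants), which is proportional to $\refmeasure e^{\theta}$ on $\supp u$ and vanishes elsewhere; Theorem~\ref{thm:impl-circuit-eqs} then certifies $\bar P\in\ol\Expfam$, and since $u^{+}-\bar P=c\,u\in\normspace$ we get $\bar P=\projE{u^{+}}$ and $\PsiE(u^{+})=u$. The value formula is now a direct computation. The circuit equations of Theorem~\ref{thm:impl-circuit-eqs} give $\langle u,\log(\bar P/\refmeasure)\rangle=0$, so $\DbarE(u)=\sum_{x}u(x)\log\frac{|u(x)|}{\bar P(x)}$; evaluating this over $\supp u^{+}$ (where $u=u^{+}$, $\bar P=(1-c)u^{+}$) and $\supp u^{-}$ (where $u=-u^{-}$, $\bar P=c\,u^{-}$) yields
\[
\DbarE(u)=-\log(1-c)+\log c=\log\tfrac{c}{1-c},
\]
whence $\log\bigl(1+e^{\DbarE(u)}\bigr)=-\log(1-c)=D(u^{+}\|\bar P)=\DE(u^{+})$, and likewise $\DE(P)=\log(1+\exp(\DbarE(\PsiE(P))))$.

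The main obstacle is to upgrade the critical-point correspondence to a genuine local-maximizer correspondence. Two points need care: $\bdKtopA$ is only piecewise smooth, with the nonsmoothness exactly along the loci where $\supp u^{+}$ or $\supp u^{-}$ changes, so I must argue that a maximizer does not sit on such a seam in a way that creates spurious ascent directions; and in the converse direction I must know that $\supp u$ is a legitimate support set for $\ol\Expfam$ (no circuit leaves $\supp u$ on only one side), which is what makes $\bar P\in\ol\Expfam$ in the previous paragraph. Both are consequences of maximality rather than of the mere stationarity used above. Once the critical sets are matched, the extremum property transfers cheaply: $\PsiE$ and $\pospartmap$ are mutually inverse and continuous on the relevant sets, and $s\mapsto\log(1+e^{s})$ is a strictly increasing homeomorphism, so the value formula turns local maxima of $\DbarE$ into local maxima of $\DE$ and back.
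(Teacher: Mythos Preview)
The paper does not actually prove Theorem~\ref{thm:Dualmaxi-glob}: its ``proof'' is a one-line citation of \cite[Theorem~1]{MatusRauh11:Maximization-ISIT2011}. So there is nothing to compare your argument against within this paper, and your sketch supplies far more than the paper does. The KKT analysis you outline is in fact the approach of the cited source: deduce from first-order conditions that a local maximizer $P$ of $\DE$ satisfies $\bar P=(1-c)P$ on $\supp P$, identify $\PsiE(P)^{+}=P$, and read off the value formula from $\langle u,\log(\bar P/\refmeasure)\rangle=0$. Your computation $\DbarE(u)=\log\frac{c}{1-c}$ and $\DE(u^{+})=-\log(1-c)$ is correct, and your identification of the two technical issues (piecewise smoothness of $\bdKtopA$, faciality of $\supp u$) is exactly right.

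There is, however, a genuine gap in your final paragraph. You write that the extremum property ``transfers cheaply'' because $\PsiE$ and $\pospartmap$ are mutually inverse and continuous and $s\mapsto\log(1+e^{s})$ is monotone. But the identity $\pospartmap\circ\PsiE=\mathrm{id}$ and the value formula $\DE(P)=\log(1+e^{\DbarE(\PsiE(P))})$ were both \emph{derived from} the first-order conditions; they hold only at critical points, not on neighbourhoods. For a generic $P$ near a maximizer one has $\PsiE(P)^{+}\neq P$, and there is no a priori relation between $\DE(P)$ and $\DbarE(\PsiE(P))$. So continuity plus monotonicity of $\log(1+e^{s})$ does not by itself convert a local maximum of $\DbarE$ at $u$ into a local maximum of $\DE$ at $u^{+}$, or conversely. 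What is needed---and what the cited reference supplies---is an inequality valid for \emph{all} $P$ (respectively all $u\in\bdKtopA$), with equality precisely at the critical points; concretely, one shows that the support of a local maximizer is determined and then compares $\DE$ and $\DbarE$ along the fiber of probability measures with that fixed support, where the two optimization problems become affinely equivalent. Your sketch contains all the ingredients for this, but the sentence claiming it is cheap hides the one step that actually requires work.
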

\begin{proof}
  See~\cite[Theorem~1]{MatusRauh11:Maximization-ISIT2011}.
\end{proof}
See~\cite{MatusRauh11:Maximization-ISIT2011} and~\cite{Rauh11:Thesis} for further relations between the functions $\DE$ and $\DbarE$.
\begin{cor}
  \label{cor:max-ge-log2}
  Let $\Expfam$ be an exponential family.  If $\ol\Expfam\neq\pmsX$, then $\max\DE\ge\log(2)$.
\end{cor}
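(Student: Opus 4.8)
The plan is to transfer the problem to the homogeneous function $\DbarE$ via Theorem~\ref{thm:Dualmaxi-glob}. First, the hypothesis $\ol\Expfam\neq\pmsX$ forces $\normspace\neq 0$: if $\normspace=0$ then $\exttangspace=\normspace^{\perp}=\Rb^{\Xcal}$, so $\Expfam$ is the set of all strictly positive distributions and $\ol\Expfam=\pmsX$. Hence Theorem~\ref{thm:Dualmaxi-glob} is applicable. Moreover $\bdKtopA\neq\emptyset$: any nonzero $u\in\normspace$ has $u^{+}(\Xcal)=u^{-}(\Xcal)>0$, because $\sum_{x}u(x)=0$ on $\normspace$ forces the positive and negative masses to coincide, and dividing $u$ by this common value lands it in $\bdKtopA$.

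Next, $\bdKtopA$ is compact, being a closed and $L^{1}$-bounded subset of the finite-dimensional space $\normspace$ (on $\bdKtopA$ one has $u^{+}(\Xcal)+u^{-}(\Xcal)=2$, and the maps $u\mapsto u^{\pm}$ are continuous). Since $t\mapsto t\log|t|$ (with value $0$ at $0$) is continuous, $\DbarE$ is continuous on $\bdKtopA$ and attains its maximum at some $u^{*}$. The crucial point is an antipodal symmetry: for $u\in\bdKtopA$ we have $(-u)^{+}=u^{-}$ and $(-u)^{-}=u^{+}$, whence $-u\in\bdKtopA$, while $|(-u)(x)|=|u(x)|$ gives $\DbarE(-u)=-\DbarE(u)$ directly from the definition (a special case of degree-one homogeneity). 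Thus $\DbarE(u)+\DbarE(-u)=0$ for every $u$, so at least one summand is nonnegative and therefore
\[
  \DbarE(u^{*})=\max_{u\in\bdKtopA}\DbarE(u)\ge 0 .
\]

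Finally, the global maximizer $u^{*}$ is in particular a local maximizer of $\DbarE$, so Theorem~\ref{thm:Dualmaxi-glob} applies: the kernel distribution $\pospartmap(u^{*})=(u^{*})^{+}$ is a local maximizer of $\DE$ and
\[
  \DE\bigl((u^{*})^{+}\bigr)=\log\bigl(1+\exp(\DbarE(u^{*}))\bigr)\ge\log\bigl(1+\exp(0)\bigr)=\log(2),
\]
using that $t\mapsto\log(1+e^{t})$ is increasing. Since $(u^{*})^{+}\in\pmsX$, this yields $\max\DE\ge\DE\bigl((u^{*})^{+}\bigr)\ge\log(2)$, as claimed.

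I expect no serious obstacle here; the weight of the argument rests entirely on the sign-flip $\DbarE(-u)=-\DbarE(u)$, which forces $\max\DbarE\ge 0$ with no computation at all. The only points needing care are purely formal: confirming that the hypothesis excludes the degenerate case $\normspace=0$ (where Theorem~\ref{thm:Dualmaxi-glob} does not apply), and noting that a global maximizer on the compact set $\bdKtopA$ is automatically a \emph{local} maximizer, which is exactly the regularity required in order to read the equality $\DE\bigl((u^{*})^{+}\bigr)=\log\bigl(1+\exp(\DbarE(u^{*}))\bigr)$ off the theorem.
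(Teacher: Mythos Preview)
Your proof is correct and follows essentially the same approach as the paper: take a global maximizer of $\DbarE$ on $\bdKtopA$, use the sign-flip $\DbarE(-u)=-\DbarE(u)$ to see the maximum is nonnegative, and read off $\DE(u^{+})\ge\log(2)$ from Theorem~\ref{thm:Dualmaxi-glob}. You have simply supplied the routine details (nontriviality of $\normspace$, nonemptiness and compactness of $\bdKtopA$) that the paper leaves implicit.
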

\begin{proof}
  Let $u\in\bdKtopA$ be a global maximizer of $\DbarE$.  Since $\DbarE(-u)=-\DbarE(u)$ the maximal value $\DbarE(u)$ is
  non-negative.  Hence $\DE(u^{+}) = \log(1 + \exp(\DbarE(u))) \ge \log(2)$.
\end{proof}
It is straightforward to compute the first-order criticality conditions of $\DbarE$:
\begin{prop}
  \label{prop:critpts-DbarE}
  Let $\Expfam$ be an exponential family with normal space $\normspace$, let $u\in\bdKtopA$ be a local maximizer of
  $\DbarE$, and let $\Ycal=\supp(u)$.  The following statements hold:
  \begin{romanlist}
  \item 
    \label{en:critpt-DbarE:v(u=0)=0}
    $v(\Ycal) = 0$
    for all $v\in\normspace$.
  \item 
    \label{en:critpt-DbarE:critineqs}
    Let $\projPE$ be the $rI$-projection of $u^{+}$ and $u^{-}$, and
    let $v\in\Ncal$.  Then
    \begin{equation}
      \label{eq:critpt-DbarE:critineqs}
      \sum_{x\in\Xcal\setminus\Ycal}v(x)\log\frac{|v(x)|}{\refmeasurex} \le v^{+}(\Zcal')\DbarE(v_{0}).
    \end{equation}
  \end{romanlist}
\end{prop}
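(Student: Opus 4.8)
The plan is to read off both conditions from the first-order behaviour of $\DbarE$ at $u$ along straight lines inside $\normspace$. Since $\DbarE$ is positively homogeneous of degree one and, for $w\in\normspace\setminus\{0\}$, the relation $w(\Xcal)=0$ forces $w^{+}(\Xcal)=w^{-}(\Xcal)>0$, every ray through a nonzero point of $\normspace$ meets $\bdKtopA$ exactly once, in $w/w^{+}(\Xcal)$. Hence the hypothesis that $u$ is a local maximizer of $\DbarE$ on $\bdKtopA$ is equivalent to saying that the scale-invariant function
\begin{equation*}
  g(w) = \frac{\DbarE(w)}{w^{+}(\Xcal)}
\end{equation*}
has a local maximum at $w=u$ on $\normspace\setminus\{0\}$; this reformulation removes the normalization constraints and lets me perturb freely. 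I therefore fix an arbitrary $v\in\normspace$ and expand $g(u+\epsilon v)$ for small real $\epsilon$. On the support $\Ycal$ the integrand $t\mapsto t\log(|t|/\refmeasurex)$ is smooth and contributes $\epsilon\big(\log(|u(x)|/\refmeasurex)+1\big)v(x)+O(\epsilon^{2})$; at each $x\notin\Ycal$ it contributes the \emph{exact} term $\epsilon v(x)\big(\log(|v(x)|/\refmeasurex)+\log|\epsilon|\big)$. The denominator $w^{+}(\Xcal)$ is piecewise linear in $\epsilon$ with a corner at $\epsilon=0$. The essential feature is the singular term $\epsilon\log|\epsilon|\,v(\Xcal\setminus\Ycal)$ coming from the coordinates that enter the support.

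For part~\ref{en:critpt-DbarE:v(u=0)=0} the point is that this singular term dominates all others as $\epsilon\to0$. Writing $B=v(\Xcal\setminus\Ycal)$, the expansion reads $g(u+\epsilon v)=\DbarE(u)+\epsilon\log|\epsilon|\,B+O(\epsilon)$, and $\epsilon\log|\epsilon|$ dominates the $O(\epsilon)$ remainder. Since $\epsilon\log|\epsilon|<0$ for $0<|\epsilon|<1$, local maximality forces $B\ge0$ as $\epsilon\to0^{+}$ and $B\le0$ as $\epsilon\to0^{-}$; hence $B=v(\Xcal\setminus\Ycal)=0$. Because $v\in\normspace$ satisfies $v(\Xcal)=0$, this is the same as $v(\Ycal)=0$, and as $v$ was arbitrary this proves~\ref{en:critpt-DbarE:v(u=0)=0}. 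It is exactly the two-sided freedom provided by $g$ that yields a statement about all of $\Ycal$ rather than about $\supp(u^{+})$ or $\supp(u^{-})$ separately. (Alternatively one could transport the problem to a maximizer of $\DE$ via Theorem~\ref{thm:Dualmaxi-glob}, but the direct computation handles both parts uniformly.)

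Once~\ref{en:critpt-DbarE:v(u=0)=0} holds, the coefficient $B$ vanishes for every $v\in\normspace$, the singularity disappears, and the exterior coordinates contribute only the honest linear term $\epsilon\sum_{x\in\Xcal\setminus\Ycal}v(x)\log(|v(x)|/\refmeasurex)$; what remains is a genuine one-sided first-order condition. The map $\projPE$ in~\ref{en:critpt-DbarE:critineqs} is well defined because $u^{+}-u^{-}=u\in\normspace$, so $u^{+}$ and $u^{-}$ have the same $rI$-projection by the characterization of $\projPE$ via $P-\projPE\in\normspace$. To put the interior contribution into the compact shape $v^{+}(\Zcal')\DbarE(v_{0})$ I will first extract the structure of $u$ on its support from the support-preserving directions: testing stationarity of $g$ against $w\in\normspace$ with $\supp(w)\subseteq\Ycal$ shows that $\log(|u(x)|/\refmeasurex)$ agrees on $\Ycal$ with an element of $\exttangspace$ up to an additive constant on $\supp(u^{+})$. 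Substituting this description into the interior sum and using $v\in\normspace=\exttangspace^{\perp}$ cancels the $\exttangspace$-part, re-expressing the interior terms through exterior data; collecting this with the exterior $\DbarE$-sum and the one-sided slope of $w^{+}(\Xcal)$, the inequality $g(u+\epsilon v)\le g(u)$ for $\epsilon\to0^{+}$ becomes exactly~\eqref{eq:critpt-DbarE:critineqs}. The main obstacle is precisely this bookkeeping at the boundary of the support: $\DbarE$ is non-smooth exactly where coordinates vanish, so the whole argument hinges on correctly isolating the $\epsilon\log|\epsilon|$ term and on retaining the one-sided denominator slope, not on any hard estimate.
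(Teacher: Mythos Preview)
The paper does not prove this proposition at all; its proof consists entirely of citations to \cite{MatusRauh11:Maximization-ISIT2011} and \cite{Rauh11:Thesis}. So there is no in-paper argument to compare against, and the only question is whether your sketch stands on its own.

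Your treatment of part~\ref{en:critpt-DbarE:v(u=0)=0} is essentially correct: passing to the scale-invariant $g(w)=\DbarE(w)/w^{+}(\Xcal)$, the coordinates $x\notin\Ycal$ contribute the dominant term $\epsilon\log|\epsilon|\cdot v(\Xcal\setminus\Ycal)$, and two-sided local maximality forces it to vanish. One small slip: it is not true that $\epsilon\log|\epsilon|<0$ for all $0<|\epsilon|<1$; for $\epsilon<0$ the product is positive. Your two one-sided conclusions $B\ge0$ and $B\le0$ are nevertheless correct once the sign is tracked properly.

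For part~\ref{en:critpt-DbarE:critineqs} there is a genuine gap. After the singular term is gone, the one-sided first-order condition at $\epsilon\to0^{+}$ reads
\[
  \sum_{x\in\Ycal}v(x)\log\frac{|u(x)|}{\refmeasurex}
  \;+\;\sum_{x\notin\Ycal}v(x)\log\frac{|v(x)|}{\refmeasurex}
  \;\le\;\DbarE(u)\Bigl[v(\supp u^{+})+v^{+}(\Xcal\setminus\Ycal)\Bigr].
\]
Your stationarity test against $w\in\normspace$ with $\supp(w)\subseteq\Ycal$ correctly yields $\log(|u|/\refmeasure)|_{\Ycal}=\theta|_{\Ycal}+\DbarE(u)\,\mathbf{1}_{\supp u^{+}}$ for some $\theta\in\exttangspace$, and the orthogonality $v\perp\exttangspace$ then turns the interior sum into $\DbarE(u)\,v(\supp u^{+})-\sum_{x\notin\Ycal}v(x)\theta(x)$. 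But the residual exterior piece $\sum_{x\notin\Ycal}v(x)\theta(x)$ does \emph{not} vanish for general $v\in\normspace$, and you do not say how it is absorbed. As a result the passage ``collecting this \dots\ becomes exactly~\eqref{eq:critpt-DbarE:critineqs}'' is unjustified. Note also that the proposition as printed is itself garbled: $v_{0}$ and $\Zcal'$ are never defined, and $\projPE$ is introduced but does not appear in the displayed inequality. From the use in Lemma~\ref{lem:equiv-classes-facial} one infers $v_{0}=u$ and $\Zcal'=\Xcal\setminus\Ycal$; the stray $\theta$-term in your computation is precisely $\sum_{x\notin\Ycal}v(x)\log(\projPE(x)/\refmeasurex)$ up to a constant, which strongly suggests that the intended inequality in the cited sources carries $\projPE$ rather than $\refmeasure$ on $\Xcal\setminus\Ycal$. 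Either way, your derivation of~\eqref{eq:critpt-DbarE:critineqs} is not complete as written.
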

\begin{proof}
  See~\cite{MatusRauh11:Maximization-ISIT2011} or~\cite[Proposition 3.21]{Rauh11:Thesis}.
\end{proof}

\section{Partition models}
\label{sec:partmod}

Partition exponential families are convex exponential families.  The information
divergence from convex exponential families has been studied
in~\cite{MatusAy03:On_Maximization_of_the_Information_Divergence}.  Apart from this, partition exponential families do
not seem to have been studied before, despite their peculiar properties.  In other contexts the name ``partition model''
is used for other mathematical objects, but there seems to be little danger of confusion.

\begin{defi}
  A \emph{partition} $\Xcal'$ of $\Xcal$ is a family
  $\Xcal'=\left\{\Xcal^{1}, \Xcal^{2}, \dots, \Xcal^{N'}\right\}$ of nonempty subsets
  $\Xcal^{i}\subset\Xcal$ such that $\Xcal=\Xcal^{1}\cup\Xcal^{2}\cup\dots\cup\Xcal^{N'}$
  and $\Xcal^{i}\cap\Xcal^{j}=\emptyset$ for all $1\le i<j\le N'$.  The subsets
  $\Xcal^{i}\subseteq\Xcal$ are called the \emph{blocks} of the partition $\Xcal'$.  For any $x\in\Xcal$
  the block $\Xcal^{i}$ containing $x$ is denoted $\Xcal^{x}$.

  The \emph{coarseness} $c(\Xcal')$ of a partition $\Xcal'$ is the cardinality of the largest
  block of $\Xcal'$.  A partition $\Xcal'$ is called \emph{homogeneous} if all
  blocks of $\Xcal'$ have the same cardinality~$c(\Xcal')$.  Partitions are in bijection with equivalence relations, the
  blocks of a partition corresponding to the equivalence classes.  The equivalence relation induced by the partition
  $\Xcal'$ is denoted $\sim_{\Xcal'}$.  In other words $x,y\in\Xcal$ satisfy $x\sim_{\Xcal'}y$ if and only if $x$ and
  $y$ lie in the same block of $\Xcal'$.
\end{defi}

\begin{defi}
  Let $\Xcal'$ be a partition of $\Xcal$.  Denote $\Rb^{\Xcal'}$ the set of functions $\vartheta:\Xcal\to\Rb$ such that
  $x\sim_{\Xcal'}y$ implies $\vartheta(x)=\vartheta(y)$.  The exponential family $\Expfam_{\Xcal'}$ with uniform
  reference measure and extended tangent space $\Rb^{\Xcal'}$ is called the \emph{partition exponential family}
  of~$\Xcal'$, and $\ol{\Expfam_{\Xcal'}}$ is the \emph{partition model} of $\Xcal'$.
\end{defi}
Partition models are, in fact, also linear families: $\ol{\Expfam_{\Xcal'}}$ equals the intersection of $\pmsX$ with the
linear space $\Rb^{\Xcal'}$.  In particular, partition exponential families are convex exponential families.  Convex
exponential families have been studied by Ay and Matúš
in~\cite{MatusAy03:On_Maximization_of_the_Information_Divergence}, which contains more detailed arguments for the
following calculations.  It follows from~\cite[Proposition~1]{MatusAy03:On_Maximization_of_the_Information_Divergence}
that a convex exponential family is a partition exponential family if and only if it contains the uniform distribution.

\begin{rem}
  Partition models can be used to model symmetries.  This was first noted by Juríček, who used this idea to compute the
  global maximizers of $\DE$ for the multinomial models~\cite{Juricek10:Maximization_from_multinomial_distributions}.
  If a symmetry group $G$ acts on $\Xcal$, then it induces a partition $\Xcal^{G}$ of $\Xcal$ into orbits
  $\Xcal^{1},\dots,\Xcal^{N'}$.  The action of $G$ extends naturally to an action on $\Rb^{\Xcal}$.  Any
  exponential family that consists of $G$-invariant probability measures is a subfamily of
  $\Expfam_{\Xcal^{G}}$ (such exponential families are called
  \emph{$G$-exchangeable}
  in~\cite{Juricek10:Maximization_from_multinomial_distributions}).  Conversely, an arbitrary partition model
  $\ol{\Expfam_{\Xcal'}}$ arises in this way from the group of all permutations $g$ of $\Xcal$ such that
  $g(\Xcal^{i})=\Xcal^{i}$ for all $\Xcal^{i}\in\Xcal'$.
\end{rem}

\begin{lemma}
  \label{lem:cs-simplex-partmodel}
  An exponential family with uniform reference measure and sufficient statistics $\suffstat\in\Rb^{h\times\Xcal}$
  is a partition exponential family if and only if its convex support is a simplex with vertex set
  $\{\suffstat_{x}:x\in\Xcal\}$.
\end{lemma}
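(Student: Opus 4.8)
The plan is to reduce both halves of the equivalence to a single numerical condition: that the number of distinct columns of $\suffstat$ equals $\dim\exttangspace$. First I would introduce the partition $\Xcal'$ of $\Xcal$ whose blocks are the fibres of the map $x\mapsto\suffstat_{x}$, so that $x\sim_{\Xcal'}y$ iff $\suffstat_{x}=\suffstat_{y}$. Since each $a_{i}$ is constant on these fibres, so is every element of $\exttangspace=\operatorname{span}\{a_{1},\dots,a_{h}\}$, giving the inclusion $\exttangspace\subseteq\Rb^{\Xcal'}$ for free. I would then observe that $\Expfam$ is a partition exponential family precisely when this inclusion is an equality: if $\exttangspace=\Rb^{\Xcal'}$ then $\Expfam=\Expfam_{\Xcal'}$ by definition (the reference measures agree, both being uniform), and conversely, if $\exttangspace=\Rb^{\Ycal}$ for some partition $\Ycal$, then two points are $\Ycal$-equivalent iff every function constant on $\Ycal$-blocks agrees on them iff $\suffstat_{x}=\suffstat_{y}$, forcing $\Ycal=\Xcal'$. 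As $\dim\Rb^{\Xcal'}$ equals the number $m$ of blocks, which is exactly the number of distinct columns, the left-hand side of the lemma becomes the condition $\dim\exttangspace=m$.

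The second step is to show that the right-hand side reduces to the same condition. Here the key computation is that the affine dimension of $\{\suffstat_{x}:x\in\Xcal\}$ equals $\dim\exttangspace-1$. The linear span of the columns has dimension $\operatorname{rank}(\suffstat)=\dim\exttangspace$, since column rank equals row rank and the rows span $\exttangspace$. The essential input is that $\unimeasure\in\exttangspace$: writing $\unimeasure=\sum_{i}c_{i}a_{i}$ produces a vector $c$ with $\langle c,\suffstat_{x}\rangle=1$ for every $x$, so all columns lie on an affine hyperplane missing the origin, and their affine hull therefore has dimension one less than their linear span. Now the convex support is a simplex with vertex set $\{\suffstat_{x}\}$ iff the distinct columns are affinely independent, i.e. iff their number $m$ satisfies $m-1=\dim\exttangspace-1$, which is again $m=\dim\exttangspace$. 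Since the two conditions coincide, the equivalence follows; I would also note that this reading is legitimate because the convex support, hence its being a simplex and its vertex set, depends on the choice of $\suffstat$ only up to affine isomorphism, and in any case the count above is carried out directly for the given $\suffstat$.

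The part requiring the most care is the bookkeeping that ties together the linear dimension, the affine dimension, and the block count, and in particular the use of $\unimeasure\in\exttangspace$, which is what lowers the affine dimension by exactly one and makes the two counts line up. A secondary point to state explicitly is the correct interpretation of ``simplex with vertex set $\{\suffstat_{x}:x\in\Xcal\}$'': it must mean that the distinct columns are affinely independent, so that none is a convex combination of the others and each is a genuine vertex; the degenerate possibility of repeated or non-extreme columns is precisely what the simplex hypothesis rules out and what the dimension count quantifies.
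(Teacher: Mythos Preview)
Your proof is correct. The paper follows the same core idea---introducing the partition $\Xcal'$ given by $x\sim y\Leftrightarrow \suffstat_{x}=\suffstat_{y}$---but treats the two implications asymmetrically and far more tersely. For the forward direction the paper does not run a dimension count at all: it simply chooses the concrete sufficient statistics $a_{i}=\mathbf{1}_{\Xcal^{i}}$, observes that every column of $\suffstat$ is then a standard unit vector, and concludes that the convex support is visibly a simplex with those vertices; the invariance of the convex support under change of sufficient statistics (which you mention) then covers the general $\suffstat$. For the backward direction the paper merely asserts that $\Expfam$ coincides with $\Expfam_{\Xcal'}$ without justification; your argument via $m=\dim\exttangspace$, and in particular your use of $\unimeasure\in\exttangspace$ to pin the columns to an affine hyperplane off the origin and thereby equate affine and linear rank up to one, is precisely what is needed to make that line rigorous. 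In short, your proof is a careful, symmetric expansion of the paper's sketch, trading the paper's concrete shortcut in the forward direction for a uniform dimension-counting argument that works in both directions.
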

\begin{proof}
  A sufficient statistics of $\Expfam_{\Xcal'}$ is given by the characteristic functions $a_{i}=\mathbf{1}_{\Xcal^{i}}$
  of the blocks of $\Xcal'$.
  Any column of $A=(a_{i}(x))_{i,x}$ is a unit vector, and therefore the convex support is a simplex.

  In the other direction
  define an equivalence relation $\sim$ on $\Xcal$ via $x\sim y$ if and only if $\suffstat_{x}=\suffstat_{y}$.  Then
  $\Expfam$ agrees with the partition exponential family of this equivalence relation.
\end{proof}

For partition models the mapping $P\mapsto \projPE$ is easy to compute: 
The equation $AP=A\projPE$ translates into %
$P(\Xcal^{i})=\projPE(\Xcal^{i})$ for $i=1,\dots,N'$.  Therefore,
\begin{equation}
  \label{eq:partition-PE}
  \projPE(x) = \projPE^{\Xcal^{x}}(x) P(\Xcal^{x}),\qquad\text{for all }x\in\Xcal,
\end{equation}
where $\projPE^{\Xcal^{x}}$ denotes the truncation of $\projPE$ to $\Xcal^{x}$.
Since $\projPE$ maximizes the entropy subject to~\eqref{eq:partition-PE}, it follows that $\projPE^{\Xcal^{x}} =
\frac{1}{|\Xcal^{x}|}\unimeasure_{\Xcal^{x}}$ is the uniform distribution on $\Xcal^{x}$.  Hence the $rI$-projection map
$P\mapsto\projPE$ averages over the blocks of the partition.
It follows that
\begin{equation*}
  \DE(P) =
  \sum_{i=1}^{N'} P(\Xcal^{i}) D(P^{\Xcal^{i}}\|\frac{1}{|\Xcal^{i}|}\unimeasure_{\Xcal^{i}})
  = \sum_{i=1}^{N'} P(\Xcal^{i}) \left(\log|\Xcal^{i}| - H(P^{\Xcal^{i}})\right).
\end{equation*}
As a consequence:
\begin{lemma}
  \label{lem:partition-maxDdE}
  If $\ol\Expfam$ is a partition model of a partition $\Xcal^{1},\dots,\Xcal^{N'}$ of coarseness $c$, then
  $\max\DE = \log(c)$.  A probability measure $P\in\pmsX$ maximizes $\DE$ if and only if the following two conditions
  are satisfied:
  \begin{romanlist}
  \item $P(\Xcal^{i})>0$ only if $|\Xcal^{i}|=c$.
  \item $P^{\Xcal^{i}}$ is a point measure for all $i$ such that $|\Xcal^{i}|=c$ and $P(\Xcal^{i})>0$.
  \end{romanlist}
\end{lemma}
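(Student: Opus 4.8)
The plan is to argue directly from the closed-form expression
\[
  \DE(P) = \sum_{i=1}^{N'} P(\Xcal^{i})\bigl(\log|\Xcal^{i}| - H(P^{\Xcal^{i}})\bigr)
\]
derived immediately before the statement, and to treat the problem as an optimization of this functional over the simplex $\pmsX$. Writing $p_{i} := P(\Xcal^{i})$, the weights satisfy $p_{i}\ge 0$ and $\sum_{i} p_{i} = 1$, so the whole argument reduces to controlling the per-block quantities $\log|\Xcal^{i}| - H(P^{\Xcal^{i}})$.

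First I would bound each summand. For any block $\Xcal^{i}$ the conditional distribution $P^{\Xcal^{i}}$ is supported on a set of $|\Xcal^{i}|$ points, so its Shannon entropy satisfies $0 \le H(P^{\Xcal^{i}}) \le \log|\Xcal^{i}|$, where the lower bound is saturated exactly when $P^{\Xcal^{i}}$ is a point measure. Hence $\log|\Xcal^{i}| - H(P^{\Xcal^{i}}) \le \log|\Xcal^{i}| \le \log c$, the last step using $|\Xcal^{i}| \le c$ by definition of the coarseness. Multiplying by $p_{i}\ge 0$, summing over $i$, and using $\sum_{i} p_{i} = 1$ gives $\DE(P) \le \log c$ for every $P \in \pmsX$. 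To see that the bound is attained, I would take $P$ to be a point measure concentrated at a single $x$ lying in a block of maximal size $c$; then the only nonzero summand equals $\log c$, establishing $\max \DE = \log c$.

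The remainder is an equality analysis that yields conditions (\textit{i}) and (\textit{ii}) directly. The chain of inequalities above is an equality precisely when, for every $i$ with $p_{i} > 0$, both intermediate inequalities are tight: the inequality $|\Xcal^{i}| \le c$ must be an equality, forcing $|\Xcal^{i}| = c$, which is condition (\textit{i}); and the inequality $H(P^{\Xcal^{i}}) \ge 0$ must be an equality, forcing $P^{\Xcal^{i}}$ to be a point measure, which is condition (\textit{ii}). Conversely, if (\textit{i}) and (\textit{ii}) hold, then every block in the support of $P$ has size $c$ and carries a point measure, so each nonzero summand equals $\log c$ and $\DE(P) = \log c$, i.e.\ $P$ is a maximizer.

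There is no serious obstacle here; the only point requiring care is the bookkeeping for blocks outside the support of $P$, where the truncation $P^{\Xcal^{i}}$ is only defined up to an arbitrary choice (cf.\ the truncation in the proof of Corollary~\ref{cor:disconnected-matroid}). This causes no difficulty, since such summands are weighted by $p_{i} = 0$ and drop out of the sum; accordingly, the inequalities and the equality conditions need only be imposed on blocks with $P(\Xcal^{i}) > 0$, exactly as the two conditions are phrased in the statement.
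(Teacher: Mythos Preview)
Your proposal is correct and follows exactly the approach the paper intends: the lemma is presented there with the words ``As a consequence'' immediately after the closed-form expression for $\DE(P)$, with no separate proof written out. Your argument simply spells out that consequence---bounding each summand by $\log c$ and reading off the equality conditions---and handles the bookkeeping for blocks outside the support correctly.
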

\begin{cor}
  \label{cor:part-mod:any-Q-is-rI-of-glob-max}
  Let $\ol\Expfam$ be the partition model of a partition $\Xcal'$ of coarseness $c$, and let $\Zcal$ be the union
  of the blocks of $\Xcal'$ of cardinality $c$.  Then any $Q\in\ol\Expfam$ with support contained in $\Zcal$ is the
  $rI$-projection of some global maximizer of~$\DE$.
  In particular, if $\Xcal'$ is homogeneous, then any $Q\in\ol\Expfam$ is the $rI$-projection of some global
  maximizer of~$\DE$.
\end{cor}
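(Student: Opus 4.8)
The plan is to prove the statement constructively: given $Q$, I would exhibit an explicit global maximizer $P$ of $\DE$ whose $rI$-projection is exactly $Q$. Two ingredients are needed. First, the characterization of global maximizers from Lemma~\ref{lem:partition-maxDdE}: a measure $P$ maximizes $\DE$ if and only if $P(\Xcal^{i})>0$ only for blocks with $|\Xcal^{i}|=c$, and $P^{\Xcal^{i}}$ is a point measure on each such block with positive mass. Second, the averaging description of the $rI$-projection recorded in and after~\eqref{eq:partition-PE}, namely $\projPE(x)=\frac{1}{|\Xcal^{x}|}P(\Xcal^{x})$ for all $x\in\Xcal$.

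Next I would unpack the hypothesis. Since $\Zcal$ is the union of the blocks of cardinality $c$, the condition $\supp(Q)\subseteq\Zcal$ says precisely that $Q(\Xcal^{i})>0$ can only happen when $|\Xcal^{i}|=c$. I would then build the candidate $P$ by choosing, for each block $\Xcal^{i}$ with $Q(\Xcal^{i})>0$, a single representative point $x_{i}\in\Xcal^{i}$, and setting $P(x_{i})=Q(\Xcal^{i})$ while $P$ vanishes off these chosen points. This is a probability measure because the masses $Q(\Xcal^{i})$ sum to one.

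Then I would verify the two required properties. By construction $P(\Xcal^{i})=Q(\Xcal^{i})$ for every $i$, so $P$ puts mass only on blocks of cardinality $c$, and on each such block $P^{\Xcal^{i}}$ is the point measure at $x_{i}$; hence $P$ satisfies both conditions of Lemma~\ref{lem:partition-maxDdE} and is a global maximizer of $\DE$. Finally, to see $\projPE=Q$, I would apply the averaging formula together with $P(\Xcal^{x})=Q(\Xcal^{x})$ to get $\projPE(x)=\frac{1}{|\Xcal^{x}|}Q(\Xcal^{x})$, and then use that $Q\in\ol\Expfam$ is uniform on each block, so that $Q(x)=\frac{1}{|\Xcal^{x}|}Q(\Xcal^{x})$ as well; the two expressions agree, giving $\projPE=Q$. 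The last assertion is then immediate: if $\Xcal'$ is homogeneous, every block has cardinality $c$, so $\Zcal=\Xcal$ and the support condition holds automatically for every $Q\in\ol\Expfam$.

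I do not expect a genuine obstacle here, since the whole argument is a direct combination of Lemma~\ref{lem:partition-maxDdE} with the explicit projection formula. The only step requiring a moment of care is the final identification $\projPE=Q$: one must remember that $Q$ being a member of $\ol\Expfam$ already forces it to be block-uniform, which is exactly what makes the averaged measure $\projPE$ coincide with $Q$ rather than merely sharing its block masses.
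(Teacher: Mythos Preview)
Your proposal is correct and follows essentially the same approach as the paper: construct $P$ by choosing a representative $x_{i}$ in each block of cardinality $c$ and setting $P(x_{i})=Q(\Xcal^{i})$, then invoke Lemma~\ref{lem:partition-maxDdE} and the averaging formula for $\projPE$. Your write-up is in fact more careful than the paper's, which simply asserts $\projPE=Q$ without spelling out the block-uniformity argument you give.
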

\begin{proof}
  For any $\Xcal^{i}\in\Xcal'$ of cardinality $c$ choose a representative $x_{i}\in\Xcal^{i}$.  Define
  $P\in\pmsX$ by $P(\Xcal^{i})=Q(\Xcal^{i})$ and $P^{\Xcal^{i}}=\delta_{x_{i}}$ for all $i$ such that
  $|\Xcal^{i}|=c$.  Then $\projPE=Q$, so the statement follows from Lemma~\ref{lem:partition-maxDdE}.
\end{proof}

\begin{rem}
  \label{rem:hier-partmod}
  Composite systems have natural homogeneous partitions, which lead to hierarchical models as defined in
  Section~\ref{sec:prelim}: Suppose that $\Xcal=\Xcal_{1}\times\dots\times\Xcal_{\numvar}$ and let $K\subseteq
  \left\{1,\dots,\numvar\right\}$.  Then $K$ induces an equivalence $\sim_{K}$ on $\Xcal$ via $x\sim_{K}y$ if and only
  if $x_{i}=y_{i}$ for all $i\in K$.  The equivalence classes of $\sim_{K}$ form a homogeneous partition $\Xcal^{K}$ of
  $\Xcal$ of coarseness $\prod_{i: i\notin K}N_{i}$.  The corresponding partition model $\ol{\Expfam_{K}}$ consists of
  those probability distributions $P$ satisfying $P(x) = P(y)$ whenever \mbox{$x\sim_{K}y$}.  Therefore, $\Expfam_{K}$
  equals the hierarchical exponential family $\Expfam_{\{K\}}$.
  Conversely, any homogeneous partition $\Xcal'$ can be used to find a bijection of
  $\Xcal$ with a composite system $\Xcal_{1}\times\Xcal_{2}$, where $\Xcal_{1}=\Xcal'$ and
  $\Xcal_{2}\in\Xcal'$.  Then the partition $\Xcal'$ arises from $\sim_{K}$, where $K=\{1\}$.
\end{rem}

\section[Exponential families where the maximal information divergence is log(2)]{Exponential families with $\max\DE=\log(2)$}
\label{sec:maxDdE=log2}

By Corollary~\ref{cor:max-ge-log2} the maximal value of $\DE$ is at least $\log(2)$ unless
$\ol\Expfam=\pmsX$.  This section studies exponential families $\Expfam$ where $\max\DE=\log(2)$.
For such an exponential family, any kernel distribution is a local maximizer of $\DE$.  Furthermore, $\DbarE(u)=0$ for
all $u\in\normspace$ (even if $u\notin\bdKtopA$).
The main results are:
\begin{thm}
  \label{thm:maxDdE=log2}
  Let $\Expfam$ be an exponential family on a finite set $\Xcal$ of cardinality $N$. If $\max\DE=\log(2)$, then the
  dimension of $\Expfam$ is at least $\lceil\frac N2\rceil-1$.
\end{thm}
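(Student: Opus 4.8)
The plan is to translate the statement into an upper bound on the dimension of the normal space and then exploit the hypothesis only through the vanishing of $\DbarE$. Since $\normspace=\exttangspace^{\perp}$ inside $\Rb^{\Xcal}$ and $\dim\Expfam=\dim\exttangspace-1$, we have $\dim\Expfam=N-1-\dim\normspace$, so the claim $\dim\Expfam\ge\lceil N/2\rceil-1$ is equivalent to $\dim\normspace\le\lfloor N/2\rfloor$. The assumption $\max\DE=\log(2)$ enters precisely through the fact recorded just above the theorem, namely that $\DbarE(u)=0$ for every $u\in\normspace$; this in turn follows from Theorem~\ref{thm:Dualmaxi-glob} together with the antipodal symmetry $\DbarE(-u)=-\DbarE(u)$ already used in Corollary~\ref{cor:max-ge-log2}. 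Because $\normspace$ is a linear space, $\DbarE$ then vanishes identically along every line $s\mapsto u+sw$ with $u,w\in\normspace$.

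The heart of the argument is to differentiate this identity twice. In a single coordinate the map $v\mapsto v\log(|v|/\refmeasurex)$ has second derivative $1/v$, so for generic $s$ one obtains $\frac{d^{2}}{ds^{2}}\DbarE(u+sw)=\sum_{x\in\supp(w)}\frac{w(x)^{2}}{u(x)+sw(x)}$, the coordinates with $w(x)=0$ contributing nothing. As $\DbarE(u+sw)\equiv 0$, this sum vanishes for all admissible $s$; evaluating at $s=0$, which is legitimate as soon as $\supp(w)\subseteq\supp(u)$, gives $\sum_{x}w(x)^{2}/u(x)=0$. Polarising in $w$ (replace $w$ by $w+w'$ and subtract) then yields the bilinear identity $\sum_{x}w(x)w'(x)/u(x)=0$ for all $w,w'\in\normspace$ supported on $\supp(u)$.

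To finish, I would fix $u\in\normspace$ of inclusion-maximal support $S=\supp(u)$; maximality forces $\supp(w)\subseteq S$ for every $w\in\normspace$, so the bilinear identity holds for all pairs in $\normspace$. Splitting $S$ into $P=\{x:u(x)>0\}$ and $M=\{x:u(x)<0\}$ and moving the $M$-terms across rewrites the identity as $\langle w,w'\rangle_{P}=\langle w,w'\rangle_{M}$, where $\langle w,w'\rangle_{P}=\sum_{x\in P}w(x)w'(x)/u(x)$ and $\langle w,w'\rangle_{M}=\sum_{x\in M}w(x)w'(x)/|u(x)|$ are positively weighted, hence positive semidefinite, symmetric forms on $\normspace$. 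Since the two forms coincide they have the same radical; but $w|_{P}=0$ forces $w|_{M}=0$ and thus $w=0$ (as $\supp(w)\subseteq S$), so the restriction $w\mapsto w|_{P}$ is injective, and likewise $w\mapsto w|_{M}$. Therefore $\dim\normspace\le\min(|P|,|M|)\le\lfloor|S|/2\rfloor\le\lfloor N/2\rfloor$, which is exactly the bound sought.

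The main technical point to get right is the differentiation step under the convention $0\log 0=0$: I must check that along the line $u+sw$ the function $\DbarE$ is smooth away from the finitely many $s$ at which some coordinate vanishes, that on each such interval it equals the constant $0$ so that all its derivatives vanish, and that $s=0$ avoids the bad set on $\supp(w)$ (which holds because $\supp(w)\subseteq\supp(u)$). The only other thing to verify is that a maximal-support $u$ really dominates all of $\normspace$, and this is immediate, since otherwise $u+\varepsilon w$ would have strictly larger support for small $\varepsilon$. Everything beyond these two observations is linear algebra, and in particular no appeal to the circuit description or to Proposition~\ref{prop:critpts-DbarE} is needed.
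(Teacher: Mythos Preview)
Your argument is correct and takes a genuinely different route from the paper. The paper works combinatorially: it introduces the coparallel equivalence relation $\sim$ on $\ul\Xcal$, shows via Lemma~\ref{lem:submatrices-rank-one} that the circuit matrix restricted to each class has rank one (so that $\dim\normspace$ is bounded by the number of classes), and then uses Lemmas~\ref{lem:equiv-classes-facial} and~\ref{lem:equiv-classes-formula}---the first of which rests on the first-order conditions of Proposition~\ref{prop:critpts-DbarE}---to conclude that on each class the rows of the circuit matrix sum to zero, forcing every class to contain at least two elements. You bypass all of this by differentiating the identity $\DbarE\equiv 0$ twice along lines in $\normspace$; the resulting relation $\sum_x w(x)^2/u(x)=0$, polarised and applied with a maximal-support $u$, yields that the two positive semidefinite forms $\langle\cdot,\cdot\rangle_P$ and $\langle\cdot,\cdot\rangle_M$ agree on $\normspace$, hence share their radical, and the injectivity of both restrictions follows.

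Your approach is more elementary and self-contained: it uses neither the circuit description nor Proposition~\ref{prop:critpts-DbarE}, only the fact recorded just before the theorem that $\DbarE$ vanishes on $\normspace$. What the paper's route buys in exchange is structural information that is reused immediately afterwards: the coparallel classes and the rank-one property feed directly into Lemma~\ref{lem:circuits-and-equiv.classes} and the proof of Theorem~\ref{thm:maxDdE=log2-partition-case}, where one needs to know that in the extremal dimension the classes coincide with the circuits. Your bilinear-form identity does not by itself produce this block decomposition, so continuing to Theorem~\ref{thm:maxDdE=log2-partition-case} along your lines would require extracting comparable structure from the equality $\langle\cdot,\cdot\rangle_P=\langle\cdot,\cdot\rangle_M$.
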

\begin{thm}
  \label{thm:maxDdE=log2-partition-case}
  Let $\Xcal$ be a finite set of cardinality $N$, and let $\Expfam$ be an exponential family on $\Xcal$ of dimension
  $\lceil\frac N2\rceil-1$ satisfying $\max\DE=\log(2)$.  If $N$ is even, then $\Expfam$ is a partition model.  If $N$
  is odd, then there is a set $\Zcal\subseteq\Xcal$ of cardinality three, a partition model
  $\ol{\Expfam_{\Xcal\setminus\Zcal}}$ on $\Xcal\setminus\Zcal$ and a one-dimensional exponential family
  $\Expfam_{\Zcal}$ on $\Zcal$ such that $\max\ID{\cdot}{\Expfam_{\Xcal\setminus\Zcal}} = \log(2) =
  \max\ID{\cdot}{\Expfam_{\Zcal}}$, and 
  the closure $\ol\Expfam$ equals the mixture of $\ol{\Expfam_{\Xcal\setminus\Zcal}}$ and $\ol{\Expfam_{\Zcal}}$.  If
  $\Expfam$ contains the uniform distribution, then $\ol\Expfam$ is a partition model.
\end{thm}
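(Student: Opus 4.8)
The plan is to squeeze out of the vanishing of $\DbarE$ a whole chain of polynomial identities that pin $\normspace$ to a partition with blocks of size at most three. Since $\max\DE=\log(2)$ we have $\DbarE\equiv 0$ on all of $\normspace$, so the restriction of the smooth function $w\mapsto\DbarE(w)=\sum_{x}w(x)\log\frac{|w(x)|}{\refmeasure(x)}$ to every affine line inside $\normspace$ vanishes identically, and hence all of its directional derivatives vanish. Setting aside the coloops (the $x$ with $u(x)=0$ for all $u\in\normspace$, which satisfy $\delta_{x}\in\exttangspace$ and will reappear as singleton blocks), I fix a $u\in\normspace$ whose support is the whole remaining ground set. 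Computing the mixed derivatives of $\DbarE$ at $u$ along directions $a_{1},\dots,a_{k}\in\normspace$ and using $\sum_{x}a_{i}(x)=0$, each is a nonzero multiple of $\sum_{x}\frac{a_{1}(x)\cdots a_{k}(x)}{u(x)^{k-1}}$; thus $\sum_{x}\frac{a_{1}(x)\cdots a_{k}(x)}{u(x)^{k-1}}=0$ for every $k\ge 2$ and all $a_{i}\in\normspace$.

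First I would read these identities inside $\exttangspace=\normspace^{\perp}$. Writing $g_{i}=a_{i}/u$, the identity with $k=j+1$ factors says $\langle g_{1}\cdots g_{j},v\rangle=0$ for all $v\in\normspace$, i.e.\ every product $g_{1}\cdots g_{j}$ lies in $\exttangspace$. Hence the unital subalgebra $\mathcal{A}\subseteq\Rb^{\Xcal}$ generated by $\tfrac1u\normspace$ is contained in $\exttangspace$. Every unital subalgebra of $\Rb^{\Xcal}$ equals $\Rb^{\mathcal{P}}$ for a partition $\mathcal{P}$, here the partition whose blocks are the classes of the relation ``the coordinate functionals $\mathrm{ev}_{x},\mathrm{ev}_{y}$ are proportional on $\normspace$'' (coloops forming singletons). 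From $\Rb^{\mathcal{P}}\subseteq\exttangspace$ I obtain $\normspace\subseteq(\Rb^{\mathcal{P}})^{\perp}$, the normal space of the partition model of $\mathcal{P}$; and on each block $P$ the proportionality of functionals makes $\normspace|_{P}$ at most one-dimensional, spanned by a sum-zero, nowhere-zero vector $r_{P}$. Consequently $\normspace\subseteq\bigoplus_{|P|\ge 2}\Rb r_{P}$, so $\dim\normspace\le m$, where $m$ is the number of blocks of size $\ge 2$.

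The structure then follows from a dimension count. With $s$ the number of singleton blocks, $N\ge s+2m\ge 2m\ge 2\dim\normspace=2\lfloor N/2\rfloor$. For $N$ even every inequality is an equality, so $s=0$, all blocks have size $2$, and $\dim\normspace=m$ forces $\normspace=\bigoplus_{P}\Rb r_{P}$ with $r_{P}\propto\delta_{x}-\delta_{y}$; thus $\Expfam$ is the partition model of a perfect matching. For $N$ odd the count gives $m=\lfloor N/2\rfloor$ and $s\le 1$: either $s=0$ with one block of size $3$ and the rest pairs, or $s=1$ with one coloop and the rest pairs. In either case every circuit of $\normspace$ lies in a single block, so Corollary~\ref{cor:disconnected-matroid}, applied with $\Ycal=\Zcal$ (the size-three block, respectively the coloop together with one pair), writes $\ol\Expfam$ as the mixture of $\ol{\Expfam_{\Xcal\setminus\Zcal}}$ and $\ol{\Expfam_{\Zcal}}$. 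The complement is a pair partition model, so $\max\ID{\cdot}{\Expfam_{\Xcal\setminus\Zcal}}=\log(2)$ by Lemma~\ref{lem:partition-maxDdE}; on $\Zcal$ the normal space is one-dimensional with $\DbarE(r_{\Zcal})=0$, whence $\max\ID{\cdot}{\Expfam_{\Zcal}}=\log(1+e^{0})=\log(2)$ by Theorem~\ref{thm:Dualmaxi-glob}.

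Finally, for the uniform case I would use that $\Expfam$ contains $\unimeasure$ exactly when the reference measure may be taken uniform, so $\DbarE(r)=\sum_{x}r(x)\log|r(x)|$. A short computation shows that for a sum-zero vector $r$ with full support on a three-element set this equals $\pm H(\beta)\ne 0$, the binary entropy of some $\beta\in(0,1)$; since $\DbarE(r_{\Zcal})=0$, the connected triple ($s=0$) cannot occur, leaving only the coloop-plus-pairs case, where $\ol\Expfam$ is the partition model of the pairs together with the singleton. The main obstacle is the first half: extracting the infinite family of identities from $\DbarE\equiv 0$ and recognizing that they amount to the containment $\mathcal{A}\subseteq\exttangspace$ of a point-separating algebra, which is what forces $\normspace$ onto a partition with one-dimensional blocks. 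Once this is in place the bookkeeping and the identification of the pieces are routine; I note in passing that the same $k=2$ identity reproves the bound of Theorem~\ref{thm:maxDdE=log2} through the signature inequality $\dim\normspace\le\min(p,q)$ for the nondegenerate form $\sum_{x}a(x)b(x)/u(x)$, on which $\normspace$ is totally isotropic.
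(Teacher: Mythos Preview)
Your argument is correct and takes a genuinely different route from the paper's.  The paper works through a chain of matroid-flavoured lemmas: from the first-order conditions of Proposition~\ref{prop:critpts-DbarE} it deduces Lemma~\ref{lem:equiv-classes-facial} (each coparallel class $\Zcal$ satisfies $\sum_{x\in\Zcal}v(x)=0$), then Lemma~\ref{lem:submatrices-rank-one} (each class has corank one), and finally Lemma~\ref{lem:circuits-and-equiv.classes} (when the codimension equals the number of classes, the circuits \emph{are} the classes), after which Corollary~\ref{cor:disconnected-matroid} yields the mixture decomposition.  You instead differentiate $\DbarE\equiv 0$ to all orders at a generic $u\in\normspace$ and read the resulting identities as saying that the unital algebra generated by $\tfrac1u\normspace$ lies in $\exttangspace$; the partition then drops out of the structure of subalgebras of $\Rb^{\Xcal}$, and the one-dimensionality of $\normspace|_{P}$ is built into your equivalence rather than proved separately.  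Your coparallel classes coincide with the paper's, but you reach them without the criticality analysis or the circuit bookkeeping; the signature remark at the end (reproving Theorem~\ref{thm:maxDdE=log2} via total isotropy) is a nice extra.

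One step you skip: having $\normspace=\bigoplus_{P}\Rb(\delta_{x}-\delta_{y})$ does not by itself make $\Expfam$ the partition model---you still need the reference measure.  This is immediate from your setup, since $\DbarE(\delta_{x}-\delta_{y})=\log\bigl(\nu(y)/\nu(x)\bigr)=0$ forces $\nu$ constant on each pair, and $\mathbf{1}_{P}\in\Rb^{\mathcal{P}}\subseteq\exttangspace$ lets you rescale $\nu$ blockwise to $\unimeasure$; but it should be said.  The same remark applies in the odd $s=1$ case.
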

\begin{prop}
  \label{prop:maxDdE=log2,N=3}
  Let $\Xcal=\{1,2,3\}$.  For any $u\in\Rb^{\Xcal}$ such that $u_{1}+u_{2}+u_{3}=0$ there exists a unique
  exponential family $\Expfam$ on $\Xcal$ with normal space $\normspace=\Rb u$ such that $\max\DE=\log(2)$.
\end{prop}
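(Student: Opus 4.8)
The plan is to reduce the statement to a one-parameter family of exponential families and to evaluate $\max\DE$ on it through the duality of Theorem~\ref{thm:Dualmaxi-glob}. We may assume $u\neq 0$ (otherwise $\normspace=\Rb u=0$ forces $\ol\Expfam=\pmsX$ and $\max\DE=0$). First I would observe that requiring $\normspace=\Rb u$ forces the extended tangent space to be the fixed two-dimensional subspace $\exttangspace=(\Rb u)^{\perp}\subseteq\Rb^{\Xcal}$; the hypothesis $u_{1}+u_{2}+u_{3}=0$ is exactly what guarantees $\unimeasure\in\exttangspace$, so such exponential families exist. The only remaining freedom is the reference measure: two strictly positive $\refmeasure,\refmeasure'$ determine the same exponential family precisely when $\log\refmeasure-\log\refmeasure'\in\exttangspace=(\Rb u)^{\perp}$, i.e.\ when the single scalar $t:=\sum_{x}u(x)\log\refmeasure(x)$ agrees. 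Hence $t\mapsto\Expfam$ is a bijection from $\Rb$ onto the set of exponential families with normal space $\Rb u$ (every value of $t$ being attained, e.g.\ by $\refmeasure=\exp(\lambda u)$, since $\sum_x u(x)^2>0$), and proving the proposition amounts to showing that $\max\DE=\log(2)$ holds for exactly one value of $t$.

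Next I would make $\bdKtopA$ and $\DbarE$ explicit. Because $\normspace=\Rb u$ is one-dimensional and $u^{+}(\Xcal)=u^{-}(\Xcal)=:s$ (as $u$ has zero sum), the set $\bdKtopA$ consists of exactly the two points $\pm\hat u$, where $\hat u:=u/s$. The degree-one homogeneity of $\DbarE$ gives $\DbarE(-\hat u)=-\DbarE(\hat u)$, so $\max_{\bdKtopA}\DbarE=|\DbarE(\hat u)|$. A direct evaluation splits $\DbarE(\hat u)=\sum_{x}\hat u(x)\log|\hat u(x)|-\sum_{x}\hat u(x)\log\refmeasure(x)$ into a constant $C(u)$ depending only on $u$ and the term $-t/s$; thus $\DbarE(\hat u)=C(u)-t/s$ is an affine, strictly monotone function of $t$.

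Finally I would invoke Theorem~\ref{thm:Dualmaxi-glob}: since $\log(1+\exp(\cdot))$ is increasing, the global maximizer of $\DE$ corresponds to the global maximizer of $\DbarE$ over $\bdKtopA$, whence $\max\DE=\log\bigl(1+\exp(|\DbarE(\hat u)|)\bigr)$. This equals $\log(2)$ if and only if $\DbarE(\hat u)=0$, i.e.\ $t=s\,C(u)$. As $t\mapsto\Expfam$ is a bijection and the equation $\DbarE(\hat u)=0$ has the unique solution $t=s\,C(u)$, there is exactly one such exponential family, giving both existence and uniqueness.

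I expect the main obstacle to be the bookkeeping in the first paragraph: verifying that the reference measure enters the family only through the single invariant $t$ and that $t\mapsto\Expfam$ is genuinely a bijection, so that uniqueness in $t$ transfers to uniqueness of the exponential family. Once that parametrization is pinned down, the one-dimensionality of $\normspace$ collapses $\bdKtopA$ to the two points $\pm\hat u$, and the remainder is the affine computation of $\DbarE(\hat u)$ together with the cited duality.
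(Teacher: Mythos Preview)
Your argument is correct and follows the same route as the paper's proof: both invoke Theorem~\ref{thm:Dualmaxi-glob} to reduce $\max\DE$ to the two candidate maximizers $u^{+},u^{-}$ (after normalization) and then isolate the unique family by the symmetry condition that the two values coincide. The only cosmetic difference is that the paper phrases this condition as $\projE{(u^{+})}=\projE{(u^{-})}=\tfrac12(u^{+}+u^{-})$, i.e.\ that $u^{+}+u^{-}$ serves as a reference measure of $\Expfam$, whereas you phrase the equivalent condition as $\DbarE(\hat u)=0$ through your parameter $t=\sum_{x}u(x)\log\refmeasure(x)$; your explicit bijection $t\leftrightarrow\Expfam$ makes the uniqueness step, which the paper leaves implicit, fully transparent.
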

The proofs of the three results will be given below after a series of preliminary lemmas.  Under the additional
assumptions that $N$ is even 
Theorem~\ref{thm:maxDdE=log2-partition-case} has a simpler proof, see Theorem~\ref{thm:logNk-optimality}.

Let $\Expfam$ be an exponential family with sufficient statistics $\suffstat$ and normal space $\normspace$.
\begin{lemma}
  \label{lem:equiv-classes-facial}
  For any $v_{0},v_{1},\dots,v_{s}\in\normspace$ let $\Zcal=\supp(v_{0})\setminus\cup_{j=1}^{s}\supp(v_{j})$.  Suppose
  that $\max\DE=\log(2)$.  Then
  \begin{equation*}
    \sum_{x\in\Zcal}v(x)\log\frac{|v(x)|}{\refmeasurex} = 0\quad\text{ and }\quad\sum_{x\in\Zcal}v(x)=0\quad\text{ for all }v\in\normspace.
  \end{equation*}
\end{lemma}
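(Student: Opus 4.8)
The plan is to extract everything from two facts recorded at the start of this section: since $\max\DE=\log(2)$, the function $\DbarE$ vanishes identically on $\normspace$, and every kernel distribution is a local maximizer of $\DE$. By Theorem~\ref{thm:Dualmaxi-glob} the second fact means that \emph{every} $u\in\bdKtopA$ is a local maximizer of $\DbarE$, so that Proposition~\ref{prop:critpts-DbarE} applies to a normal vector of essentially arbitrary support. I would exploit precisely this freedom in the choice of support.

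First I would turn the criticality inequality into an equality. For $u\in\bdKtopA$ with $\Ycal=\supp(u)$, the right-hand side of Proposition~\ref{prop:critpts-DbarE}\,\ref{en:critpt-DbarE:critineqs} is a scalar multiple of $\DbarE$ evaluated at a normal vector, hence vanishes; the inequality thus reduces to
\begin{equation*}
  \sum_{x\in\Xcal\setminus\Ycal}v(x)\log\frac{|v(x)|}{\refmeasurex}\le 0\qquad\text{for all }v\in\normspace .
\end{equation*}
Since $\normspace$ is a linear space and the left-hand functional is odd under $v\mapsto -v$, testing with both $v$ and $-v$ upgrades this to an equality. Together with Proposition~\ref{prop:critpts-DbarE}\,\ref{en:critpt-DbarE:v(u=0)=0} and $v(\Xcal)=0$, I then obtain, for every support $\Ycal$ of a normal vector and every $v\in\normspace$, the two identities
\begin{equation*}
  \sum_{x\in\Xcal\setminus\Ycal}v(x)\log\frac{|v(x)|}{\refmeasurex}=0\qquad\text{and}\qquad v(\Xcal\setminus\Ycal)=0 .
\end{equation*}

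Next I would realize the set $\Zcal$ as a difference of two such complements. For generic coefficients the vector $\sum_{j=1}^{s}c_{j}v_{j}$ has support exactly $Y:=\bigcup_{j\ge1}\supp(v_{j})$, since $\sum_{j}c_{j}v_{j}(x)=0$ for generic $c$ exactly when all $v_{j}(x)=0$; likewise $\sum_{j=0}^{s}c_{j}v_{j}$ has support $Y^{+}:=\bigcup_{j\ge0}\supp(v_{j})$. Both $Y$ and $Y^{+}$ are thus supports of normal vectors (rescaling a nonzero normal vector into $\bdKtopA$ does not change its support), and by the definition of $\Zcal$ one has the disjoint decomposition $Y^{+}=Y\sqcup\Zcal$, equivalently $\Xcal\setminus Y=(\Xcal\setminus Y^{+})\sqcup\Zcal$. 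Applying the two identities once with $\Ycal=Y$ and once with $\Ycal=Y^{+}$ and subtracting then gives
\begin{equation*}
  \sum_{x\in\Zcal}v(x)\log\frac{|v(x)|}{\refmeasurex}=0\qquad\text{and}\qquad v(\Zcal)=0\qquad\text{for all }v\in\normspace,
\end{equation*}
which is the claim; the degenerate case $Y=\emptyset$ is covered directly by the single relation for $\Ycal=Y^{+}=\Zcal$.

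The hard part is the first step: recognizing that the criticality \emph{inequality} collapses to an \emph{equality}. This rests on two points that I would check carefully — that $\max\DE=\log(2)$ forces the right-hand side of \ref{en:critpt-DbarE:critineqs} to be $0$ (this is the only place the hypothesis enters, through $\DbarE\equiv0$), and that the functional $v\mapsto\sum_{x}v(x)\log(|v(x)|/\refmeasurex)$ is genuinely odd, so that the two-sided test is legitimate. By comparison, the supporting steps are routine: the genericity claim holds because each $x$ imposes at most one linear condition on the coefficients, and the disjointness $Y^{+}=Y\sqcup\Zcal$ is immediate from the definition of $\Zcal$.
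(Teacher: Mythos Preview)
Your argument is correct and the key observation --- that under $\max\DE=\log(2)$ the criticality inequality of Proposition~\ref{prop:critpts-DbarE}\,\ref{en:critpt-DbarE:critineqs} collapses to an equality by oddness --- is exactly the one the paper uses. The route after that differs, however. The paper proceeds by induction on $s$: for the base case $s=0$ it derives the same two complement identities you obtain, and for the inductive step it restricts to $\Ycal=\Xcal\setminus\supp(v_{s})$, builds a new exponential family $\Expfam'$ there with normal space $\{v|_{\Ycal}:v\in\normspace\}$, checks that $\Dbar_{\Expfam'}\equiv 0$ via the $s=0$ case, and then applies the lemma to $\Expfam'$ with one fewer $v_{j}$. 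Your approach instead stays in the original $\Expfam$ and realises $\Zcal$ as the set-theoretic difference of two complements $\Xcal\setminus Y$ and $\Xcal\setminus Y^{+}$, exploiting a generic linear combination to produce normal vectors with supports exactly $Y$ and $Y^{+}$. This is more elementary --- no auxiliary exponential family is needed --- and arguably cleaner, at the cost of the small genericity argument. One cosmetic point: in your degenerate case $Y=\emptyset$, the ``single relation for $\Ycal=Y^{+}=\Zcal$'' literally controls $\Xcal\setminus\Zcal$, not $\Zcal$; you still need to subtract from the global identities $\DbarE(v)=0$ and $v(\Xcal)=0$, which you have available but do not name explicitly there.
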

\begin{proof}
  The proof is by induction on $s$.  Let $s=0$.  Any $v_{0}\in\normspace$ satisfies $\DbarE(v_{0})=0$ and is a local
  maximizer of~$\DbarE$.  The equality $v(\Zcal)=0$ for all $v\in\normspace$ follows from
  Proposition~\subref{prop:critpts-DbarE}{en:critpt-DbarE:v(u=0)=0}.  Let $\Zcal'=\Xcal\setminus\Zcal$.
  Proposition~\subref{prop:critpts-DbarE}{en:critpt-DbarE:critineqs} implies that
  \begin{equation*}
    \sum_{x\in\Zcal'}v(x)\log\frac{|v(x)|}{\refmeasurex} \le v^{+}(\Zcal')\DbarE(v_{0}) = 0
  \end{equation*}
  for all $v\in\normspace$.  Together with the same inequality with $v$ replaced by $-v$ it follows that
  $\sum_{x\in\Zcal'}v(x)\log\frac{|v(x)|}{\refmeasurex} = 0$.  Hence
  $\sum_{x\in\Zcal}v(x)\log\frac{|v(x)|}{\refmeasurex} = \DbarE(v)-\sum_{x\in\Zcal'}v(x)\log\frac{|v(x)|}{\refmeasurex}
  = 0$.

  If $s\ge 1$, then let $\Ycal=\Xcal\setminus\supp(v_{s})$.
  Let $\Expfam'$ be the exponential family on $\Ycal$ with reference measure the restriction $\refmeasure|_{\Ycal}$ of
  $\refmeasure$ to $\Ycal$ and normal space $\normspace'=\Set{v|_{\Ycal}: v\in\normspace}$.  The case $s=0$ implies
  $\Dbar_{\Expfam'}(w) = \DbarE(v) - \sum_{x\in\supp(v_{s})}v(x)\log\frac{|v(x)|}{\refmeasurex} = 0$ for all
  $w=v|_{\Ycal}\in\normspace'$.  Therefore, the statement follows from induction.
\end{proof}

Let $\ul\Xcal=\Set{x\in\Xcal: v(x)\neq0\text{ for some }v\in\normspace}$.  Define a relation $\sim$ on $\ul\Xcal$ via
\begin{equation*}
  x\sim y \quad\Longleftrightarrow\quad v(y)\neq0\text{ for all }v\in\normspace\text{ such that }v(x)\neq0.
\end{equation*}
It is easy to see that $\sim$ is an equivalence relation: If there exist $v,w\in\normspace$ such that $v(y)\neq 0=v(x)$
and $w(x)\neq0\neq w(y)$, then $u:= v(y)w - w(y)v\in\normspace$ satisfies $u(y)=0\neq u(x)$, and so $\sim$ is symmetric.
Transitivity can be shown similarly.  In the language of matroid theory the equivalence classes are the coparallel
classes.

\begin{lemma}
  \label{lem:equiv-classes-formula}
  A subset $\Zcal\subseteq\Xcal$ is an equivalence class of $\sim$ if and only if there exist circuits
  $\sigma_{0},\sigma_{1},\dots,\sigma_{s}$ of $\normspace$ such that
  \begin{equation*}
    \Zcal=\sigma_{0}\setminus\cup_{j=1}^{s}\sigma_{j},
  \end{equation*}
  and such that $\Zcal\setminus\sigma\in\{\emptyset,\Zcal\}$ for all circuits $\sigma$ of $\normspace$.
\end{lemma}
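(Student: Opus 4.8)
The plan is to recast the equivalence relation $\sim$ entirely in terms of circuits, after which the statement becomes a combinatorial fact about how circuits meet an equivalence class. The key reformulation I would establish first is: \emph{$x\sim y$ if and only if $x$ and $y$ lie in exactly the same circuits of $\normspace$.} One direction is immediate from the definition, since a circuit is the support of a circuit vector $c$, and $x\sim y$ forces $c(x)\neq0\Rightarrow c(y)\neq0$ (and symmetrically). For the converse I would invoke Lemma~\ref{lem:circuitinvector}: if $v\in\normspace$ has $v(x)\neq0$, then $v$ contains a circuit vector nonzero at $x$, whose support is a circuit through $x$; if $x$ and $y$ lie in the same circuits, this circuit contains $y$ as well, so $v(y)\neq0$. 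With this in hand, the side condition ``$\Zcal\setminus\sigma\in\{\emptyset,\Zcal\}$ for every circuit $\sigma$'' says precisely that every circuit contains all of $\Zcal$ or is disjoint from $\Zcal$, i.e.\ that all points of $\Zcal$ lie in the same circuits; equivalently, $\Zcal$ is contained in a single equivalence class.

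For the forward implication I would fix an equivalence class $\Zcal$ and a point $x\in\Zcal$. Since $x\in\ul\Xcal$ there is a circuit $\sigma_{0}\ni x$, and because equivalent points share circuits, $\Zcal\subseteq\sigma_{0}$. It remains to carve away $\sigma_{0}\setminus\Zcal$. For each $z\in\sigma_{0}\setminus\Zcal$ we have $z\not\sim x$, so by the definition of $\sim$ there is some $w\in\normspace$ with $w(z)\neq0=w(x)$; applying Lemma~\ref{lem:circuitinvector} to $w$ at $z$ yields a circuit $\sigma_{z}\subseteq\supp(w)$ through $z$ that avoids $x$, hence (again by the circuit reformulation) avoids all of $\Zcal$. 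Finiteness of $\Xcal$ gives finitely many such circuits $\sigma_{1},\dots,\sigma_{s}$, and a short set-theoretic check shows $\Zcal=\sigma_{0}\setminus\bigcup_{j=1}^{s}\sigma_{j}$. The side condition then holds for \emph{every} circuit automatically, since $\Zcal$ is a full class and so each circuit contains all or none of it.

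For the converse I would assume $\Zcal=\sigma_{0}\setminus\bigcup_{j=1}^{s}\sigma_{j}$ together with the side condition. As noted above, the side condition already forces $\Zcal$ into a single class $[x]$; the remaining task is to rule out $\Zcal$ being a proper subset, i.e.\ to show $[x]\subseteq\Zcal$. Take $y\sim x$ with $y\in\ul\Xcal$. Since $x\in\sigma_{0}$, sharing circuits gives $y\in\sigma_{0}$; and since $\Zcal$ is disjoint from each $\sigma_{j}$ with $j\geq1$, we have $x\notin\sigma_{j}$, whence $y\notin\sigma_{j}$ as well. Therefore $y\in\sigma_{0}\setminus\bigcup_{j}\sigma_{j}=\Zcal$, so $[x]\subseteq\Zcal$ and equality holds.

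I expect the main obstacle to be the construction of the carving circuits $\sigma_{z}$ in the forward direction: one must pass from the mere inequivalence $z\not\sim x$ to an honest circuit through $z$ that avoids the \emph{entire} class $\Zcal$, and this is exactly where the definition of $\sim$ (supplying a vector vanishing at $x$ but not $z$) together with Lemma~\ref{lem:circuitinvector} does the real work. The remaining arguments are routine bookkeeping with the circuit reformulation.
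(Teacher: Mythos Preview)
Your proof is correct and follows the same strategy as the paper: both hinge on the circuit reformulation of $\sim$ (points are equivalent iff they lie in exactly the same circuits), obtained via Lemma~\ref{lem:circuitinvector} to pass from an arbitrary separating vector to a separating circuit. The paper's proof is terser, recording only the two key implications and leaving the explicit construction of $\sigma_{0},\dots,\sigma_{s}$ and the verification of the side condition to the reader, whereas you spell these steps out in full.
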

\begin{proof}
  If $x\not\sim y$ for some $y\in\Xcal$, then there exists a $v\in\normspace$ such that $v(x)\neq 0$ and $v(y)=0$.  By
  Lemma~\ref{lem:circuitinvector} there exists a circuit with the same property.  
  Conversely, if $y\sim x$, then $y\in\sigma$ for any circuit $\sigma$ such that $x\in\sigma$.
\end{proof}

Let $C\in\Rb^{c\times\Xcal}$ be a matrix such that the rows $c_{1},\dots,c_{c}$ of $C$ form a circuit basis
of~$\normspace$.  Since each circuit basis contains a basis, the rank of $C$ equals the dimension of $\normspace$.
The columns of $C$ are denoted by $\{C_{x}\}_{x\in\Xcal}$.
\begin{lemma}
  \label{lem:submatrices-rank-one}
  Let $\Zcal$ be an equivalence class of $\sim$.  
  The rank of the submatrix $C|_{\Zcal}$ consisting of those columns $C_{x}$ indexed by $\Zcal$ is one.
\end{lemma}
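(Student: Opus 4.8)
The plan is to show that any two columns $C_{x}, C_{y}$ with $x,y\in\Zcal$ are scalar multiples of one another; since $x\in\ul\Xcal$ forces $C_{x}\neq0$, this yields rank exactly one.

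The key observation I would establish first is that for every $v\in\normspace$ the restriction $v|_{\Zcal}$ either vanishes identically or is nowhere zero on $\Zcal$. Indeed, if $v(x_{0})\neq0$ for some $x_{0}\in\Zcal$, then for any $y\in\Zcal$ we have $x_{0}\sim y$, and the definition of $\sim$ gives $v(y)\neq0$. Applying this to the rows of $C$ (which are circuit vectors, hence lie in $\normspace$), each row $c_{i}$ satisfies either $c_{i}(x)=0$ for all $x\in\Zcal$ or $c_{i}(x)\neq0$ for all $x\in\Zcal$.

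Next, I would fix $x,y\in\Zcal$ and consider two rows $c_{i},c_{j}$ that do not vanish on $\Zcal$. Form the linear combination $w:=c_{j}(x)\,c_{i}-c_{i}(x)\,c_{j}\in\normspace$, which satisfies $w(x)=0$. Since $x\in\Zcal$ and $w$ vanishes at a point of $\Zcal$, the first observation forces $w|_{\Zcal}\equiv0$, so in particular $w(y)=0$; that is, $c_{j}(x)c_{i}(y)=c_{i}(x)c_{j}(y)$, whence $c_{i}(x)/c_{i}(y)=c_{j}(x)/c_{j}(y)$ (all four quantities being nonzero). Thus the ratio $\lambda:=c_{i}(x)/c_{i}(y)$ is the same for every row not vanishing on $\Zcal$; for the rows that vanish on $\Zcal$ both $c_{i}(x)$ and $c_{i}(y)$ are zero, so the identity $C_{x}=\lambda C_{y}$ holds coordinate by coordinate. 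This proves proportionality, and the lemma follows.

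The argument is essentially self-contained given the definition of $\sim$; the only subtle point—and the main step—is the choice of the combination $w$ vanishing at $x$, after which the all-or-nothing vanishing behaviour of normal vectors on $\Zcal$ does the work. One could alternatively deduce the first observation from Lemma~\ref{lem:equiv-classes-formula}, reading off that each circuit either contains $\Zcal$ or misses it, but the direct argument through the coparallel class is shorter and avoids invoking the circuit characterisation.
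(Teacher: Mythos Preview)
Your argument is correct and is essentially the same as the paper's: both hinge on forming the combination $c_{j}(x)c_{i}-c_{i}(x)c_{j}\in\normspace$, which vanishes at $x$, and then invoking the all-or-nothing behaviour of normal vectors on a coparallel class. The only cosmetic difference is that the paper runs the same step contrapositively (if $C|_{\Zcal}$ had rank $>1$, two rows with linearly independent restrictions yield a $v$ that vanishes at $x$ but not on all of $\Zcal$, contradicting that $\Zcal$ is an equivalence class), while you phrase it as a direct proof of column proportionality.
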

\begin{proof}
  Let $\Zcal\subseteq\Xcal$.  If the rank of $C|_{\Zcal}$ is larger than one, then there exist two circuit vectors
  $c_{1},c_{2}$ such that $c_{1}|_{\Zcal}$ and $c_{2}|_{\Zcal}$ are linearly independent and have support $\Zcal$.  Let
  $x\in\Zcal$.  Let $v = c_{2}(x)c_{1} - c_{1}(x)c_{2}\in\normspace$.  Then $v|_{\Zcal}\neq 0$ and
  $\supp(v|_{\Zcal})\subseteq\Zcal\setminus\{x\}$.  Therefore, $\Zcal$ is not an equivalence class of $\sim$.
\end{proof}
The main argument of the last proof can be reformulated in terms of the elimination axiom of oriented matroid theory,
cf.~\cite{Oxley92:Matroid_Theory}.  In the language of matroid theory Lemma~\ref{lem:submatrices-rank-one} states that
the coparallel classes of a matroid have corank one.
\begin{proof}[Proof of Theorem~\ref{thm:maxDdE=log2}]
  Suppose $\max\DE=\log(2)$.  By Lemma~\ref{lem:submatrices-rank-one}, the rank of $C$ is bounded from above by the
  number of equivalence classes of $\sim$.  Let $\Zcal$ be an equivalence class of $\sim$.  By definition, the submatrix
  $C|_{\Zcal}\in\Rb^{c\times\Zcal}$ is not the zero matrix.  By Lemmas~\ref{lem:equiv-classes-facial}
  and~\ref{lem:equiv-classes-formula} the rows $c_{i}|_{\Zcal}$ of $C|_{\Zcal}$ satisfy $\sum_{x\in\Zcal}c_{i}(x)=0$.
  Hence each equivalence class must contain at least two elements.  Therefore, the rank of $C$, which equals the
  codimension of $\Expfam$, is bounded from above by $\lfloor\frac N2\rfloor$, and so the dimension of $\Expfam$ is bounded
  from below by $N-1-\lfloor\frac N2\rfloor = \lceil\frac N2\rceil-1$.
\end{proof}
\begin{lemma}
  \label{lem:circuits-and-equiv.classes}
  If the dimension of $\normspace$ equals the number of equivalence classes of $\sim$, then the equivalence classes are
  the circuits of $\normspace$.  In other words, the circuit vectors $c_{1},\dots,c_{c}$ of a circuit basis are in
  bijection with the equivalence classes $\Zcal_{1},\dots,\Zcal_{c}$, such that $\Zcal_{i}=\supp(c_{i})$.  Hence
  $\ol\Expfam$ is the mixture of $\ol\Expfam_{1},\dots,\ol{\Expfam_{c}}$, where $\Expfam_{c}$ is the
  exponential family $\ol\Expfam\cap\spms{\Zcal_{i}}$.
\end{lemma}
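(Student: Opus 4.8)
The plan is to use the rank-one structure from Lemma~\ref{lem:submatrices-rank-one} together with the dimension hypothesis to split the column space of $C$ into a direct sum indexed by the equivalence classes, and then to extract exactly one circuit vector per class. Write $\Zcal_{1},\dots,\Zcal_{m}$ for the equivalence classes of $\sim$, so that the hypothesis reads $m=\dim\normspace$. Since the rows of $C$ span $\normspace$, every $v\in\normspace$ has the form $v=\lambda^{T}C$ for some $\lambda\in\Rb^{c}$, and then $v(x)=\lambda^{T}C_{x}$ in terms of the $x$-th column $C_{x}$ of $C$. In particular $C_{x}=0$ for $x\notin\ul\Xcal$, while $C_{x}\neq0$ for $x\in\ul\Xcal$.

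First I would set up the direct sum. By Lemma~\ref{lem:submatrices-rank-one} the columns $\{C_{x}:x\in\Zcal_{i}\}$ of each class span a one-dimensional subspace $L_{i}\subseteq\Rb^{c}$; fix a generator $g_{i}$ of $L_{i}$, so $C_{x}=\alpha_{x}g_{i}$ with $\alpha_{x}\neq0$ for every $x\in\Zcal_{i}$. As the classes cover $\ul\Xcal$ and all other columns vanish, the column space of $C$ equals $L_{1}+\dots+L_{m}$, of dimension at most $m$. Because this dimension equals $\operatorname{rank}(C)=\dim\normspace=m$, the sum is forced to be direct: the column space is $\bigoplus_{i=1}^{m}L_{i}$. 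This is the step I expect to be the crux, since it is precisely the dimension hypothesis that upgrades the a priori bound $\operatorname{rank}(C)\le m$ to directness, and directness is what makes the construction below possible.

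Next I would produce, for each $i$, a circuit vector with support exactly $\Zcal_{i}$. Put $W=\bigoplus_{j\neq i}L_{j}$. Since $L_{i}\cap W=0$ and $L_{i}\neq0$, we have $L_{i}\not\subseteq W$, hence $W^{\perp}\not\subseteq L_{i}^{\perp}$, so I may pick $\lambda\in\Rb^{c}$ with $\lambda\perp L_{j}$ for all $j\neq i$ but $\lambda^{T}g_{i}\neq0$. The vector $v_{i}=\lambda^{T}C\in\normspace$ then vanishes off $\Zcal_{i}$ and satisfies $v_{i}(x)=\alpha_{x}(\lambda^{T}g_{i})\neq0$ for $x\in\Zcal_{i}$, so $\supp(v_{i})=\Zcal_{i}$. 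For minimality, any $u\in\normspace$ with $\supp(u)\subseteq\Zcal_{i}$, written $u=\mu^{T}C$, satisfies $u(x)=\alpha_{x}(\mu^{T}g_{i})$ on $\Zcal_{i}$ and is therefore the scalar multiple $(\mu^{T}g_{i})/(\lambda^{T}g_{i})$ of $v_{i}$; hence $v_{i}$ is a circuit vector and $\Zcal_{i}$ is a circuit.

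Finally I would show there are no further circuits and deduce the mixture. By Lemma~\ref{lem:equiv-classes-formula} every circuit is a union of equivalence classes, and no circuit can strictly contain another, because a circuit vector whose support lies inside a circuit $\supp(c)$ is a scalar multiple of $c$ and hence has that same support. Since each class is already a circuit, a circuit cannot strictly contain a class, so every circuit equals a single class. Thus the circuits are exactly $\Zcal_{1},\dots,\Zcal_{m}$; a circuit basis therefore has $c=m=\dim\normspace$ vectors $c_{1},\dots,c_{c}$ with $\supp(c_{i})=\Zcal_{i}$, and these supports are pairwise disjoint. As every circuit vector is supported within one class, repeated application of Corollary~\ref{cor:disconnected-matroid} (in the connected-components form stated after it) yields the asserted decomposition of $\ol\Expfam$ as the mixture of the exponential families $\ol\Expfam\cap\spms{\Zcal_{i}}$.
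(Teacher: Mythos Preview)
Your proof is correct. Both your argument and the paper's hinge on Lemma~\ref{lem:submatrices-rank-one} together with the dimension hypothesis, but the executions differ. The paper reorders $\Xcal$ so the classes are consecutive, applies Gaussian elimination to $C$, and observes that the rank-one property forces the reduced rows into a block-triangular pattern with the $i$th row supported in $\Zcal_{i}\cup\cdots\cup\Zcal_{c'}$; it then peels off the last class (showing that any $v\in\normspace$ touching $\Zcal_{c'}$ can be corrected by a multiple of the bottom row so as to avoid $\Zcal_{c'}$) and proceeds by induction. You instead package the rank-one fact as a direct-sum decomposition $\bigoplus_{i}L_{i}$ of the column space of $C$ and use orthogonality to produce all the circuit vectors simultaneously, without reordering or induction; your closing step---ruling out larger circuits via the observation from Lemma~\ref{lem:equiv-classes-formula} that circuits are unions of classes, combined with the antichain property of circuits---is a tidy alternative to the paper's inductive peeling. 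Your route is a bit more conceptual, the paper's more explicitly constructive; both land on the mixture statement through Corollary~\ref{cor:disconnected-matroid}.
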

\begin{proof}
  Let $\Zcal_{1},\dots,\Zcal_{c'}$ be the set of equivalence classes of $\sim$.  Reorder $\Xcal$ such that the
  equivalence classes are given by consecutive numbers.  Let $\tilde C$ be the matrix obtained from $C$ by doing a Gauss
  elimination through row operations.  By assumption $\tilde C$ has $\dim\normspace=c'$ nonzero rows.  By
  Lemma~\ref{lem:submatrices-rank-one}, the $i$th row $\tilde c_{i}$ of $\tilde C$ has support contained in
  $\Zcal_{i}\cup\dots\cup\Zcal_{c'}$.  In particular, $\supp(\tilde c_{c'})=\Zcal_{c'}$.  Therefore, $\tilde c_{c'}$ is
  a circuit vector.  If $v\in\normspace$ has $v(x)\neq 0$ for some $x\in\Zcal_{c'}$, then $\tilde v = v -
  \frac{v(x)}{\tilde c_{c'}(x)}\tilde c_{c'}(x)$ satisfies $\supp(\tilde v) = \supp(v)\setminus\Zcal_{c'}$.  Hence no
  other circuit intersects $\Zcal_{c'}$.  By induction, $\supp(c_{i})$ equals an equivalence class of $\sim$ for each
  $i$.  The first statement follows from $\supp(c_{i})\neq\supp(c_{j})$ for $1\le i <j\le c$.  The last statement is a
  consequence of Corollary~\ref{cor:disconnected-matroid}.
\end{proof}

\begin{proof}[Proof of Theorem~\ref{thm:maxDdE=log2-partition-case}]
  Assume that the dimension of $\Expfam$ equals $\lceil\frac N2\rceil-1$.  By the proof of Theorem~\ref{thm:maxDdE=log2}
  there must be $m:=\lfloor\frac N2\rfloor$ equivalence classes of $\sim$.  If $N$ is even, then each equivalence
  class has cardinality two.  If $N$ is odd, then there may be one equivalence class $\Zcal$ of cardinality three.  In
  this case, reorder $\Xcal$ such that $\Zcal=\{N-2,N-1,N\}$.
  By Lemma~\ref{lem:circuits-and-equiv.classes} there exists a circuit vector $c\in\normspace$ such that
  $\supp(c)=\Zcal$.  Assume without loss of generality that $c_{N-2}$ and $c_{N-1}$ are positive and that
  $c_{N}=-(c_{N-1}+c_{N-2}) = -1$.  Then
  \begin{equation*}
    \sum_{i=N-2}^{N}c_{i}\log|c_{i}| = -h(c_{N-1},c_{N-2})\neq 0,
  \end{equation*}
  where $h(p,q)$ is the entropy of a binary random variable with probabilities $p,q$.  Therefore, if $N$ is even or if
  $\unimeasure$ is a reference measure of $\Expfam$, then all equivalence classes of $\sim$ have cardinality two.

  By Lemma~\ref{lem:circuits-and-equiv.classes} there are exponential families $\Expfam_{1},\dots,\Expfam_{c}$ such that
  $\Expfam_{i}\subseteq\spms{\Zcal_{i}}$ for $i=1,\dots,c$ and such that $\ol\Expfam$ is the mixture of
  $\ol{\Expfam_{1}},\dots,\ol{\Expfam_{c}}$.  For $i=1,\dots,c$ there is a unique circuit vector with support
  $\Zcal_{i}$, hence $\Expfam_{i}\neq\spms{\Zcal_{i}}$, so $\Expfam_{i}$ has dimension $|\Zcal_{i}|-1$.  If
  $|\Zcal_{i}|=2$, then $\Expfam_{i}$ consists of the uniform distribution $\frac12\unimeasure_{\Zcal_{i}}$ on
  $\Zcal_{i}$, so $\ol{\Expfam_{i}}$ is a partition model, and also the mixture of $\ol{\Expfam_{i}}$ for those $i$
  satisfying $|\Zcal_{i}|=2$ is a partition model.
\end{proof}

\begin{proof}[Proof of Proposition~\ref{prop:maxDdE=log2,N=3}]
  Let $\Expfam$ be a one-dimensional exponential family with normal space $\Rb u$.  Without loss of generality assume
  that $u^{+}$ and $u^{-}$ are probability measures.  By Theorem~\ref{thm:Dualmaxi-glob} the set of local maximizers of
  $\DE$ consists of $u^{+}$ and $u^{-}$, and both are projection points.  $\Expfam$ satisfies $\max\DE=\log(2)$ if
  and only if $\projE{(u^{+})}=\projE{(u^{-})}=\frac12(u^{+}+u^{-})$, which happens if and only if $u^{+}+u^{-}$ is a
  reference measure of $\Expfam$, proving existence and uniqueness of~$\Expfam$.
\end{proof}

\section{Optimal exponential families}
\label{sec:opti}

Corollary~\ref{cor:max-ge-log2} says that $\max\DE\ge\log(2)$ for all exponential families $\Expfam\neq\spmsX$.
Therefore $D$-optimality is only interesting for $D\ge\log(2)$.  The case $D=\log(2)$ was studied in
Section~\ref{sec:maxDdE=log2}, where it was shown that $\DNk=\log(2)$ if and only if $\lceil \frac N2\rceil-1\le k< N$.
This condition is equivalent to $\lceil\frac{N}{k+1}\rceil=2$.  Many $\log(2)$-dimension optimal exponential families
are partition exponential families.

\begin{ex}
  \label{ex:0D-optimality}
  Any zero-dimensional exponential family $\Expfam=\{\refmeasure\}$ is dimension-optimal.
  The function $P\mapsto \IDr{P}$ is convex on the probability simplex $\pmsX$ and
  attains its maximum at a vertex of $\pmsX$, which corresponds to a point distribution.  Therefore,
  \begin{equation*}
    \max\DE=\max\{-\log(\refmeasure_{x}):x\in\Xcal\}\ge\log|\Xcal|.
  \end{equation*}
  Hence $\DNkof{N}{1}=\log(N)$, and $\Expfam$ is $D$-optimal if and only if $\refmeasure_{x}\ge e^{-D}$ for all
  $x\in\Xcal$.  Zero-dimensional exponential families are the dimension $D$-optimal exponential families for
  $D\ge\log|\Xcal|$.  In general, they are not the only inclusion $D$-optimal exponential families, see
  Example~\ref{ex:3-optimality}.
\end{ex}

\begin{ex}
  \label{ex:3-optimality}
  Let $\Xcal=\{1,2,3\}$.  Any zero-dimensional exponential family $\Expfam=\{\refmeasure\}$ satisfies
  $\max\DE
  \ge\log(3)$.  Therefore, if $\log(2)\le D<\log(3)$, then the dimension $D$-optimal exponential families are
  one-dimensional.
  The normal space $\normspace$ of any one-dimensional exponential family $\Expfam$ is spanned by a single element $u$,
  which can be taken to be normalized, such that $\bdKtopA = \{\pm u\}$.  By Theorem~\ref{thm:Dualmaxi-glob} the set of
  local maximizers of $\DE$ equals $\{u^{+},u^{-}\}$.  Let $\projPE=\projE{(u^{+})}=\projE{(u^{-})}$, then $\projPE=\mu
  u^{+} + (1-\mu)u^{-}$ for some $0<\mu<1$.  Hence $\DE(u^{+}) = -\log\mu$ and $\DE(u^{-}) = -\log(1-\mu)$.  It follows
  that $\Expfam$ is dimension $D$-optimal if and only $e^{-D}\le\mu\le 1 -e^{-D}$.  Alternatively, using
  Theorem~\ref{thm:Dualmaxi-glob}, $\Expfam$ is dimension $D$-optimal if and only if $-\log(e^{D}-1) \le \DbarE(u)\le
  \log(e^{D}-1)$.

  If $D\ge\log(3)$, then the dimension $D$-optimal exponential families are zero-di\-men\-sion\-al, consisting of a
  single point $\{\refmeasure\}$ such that $\min\{\refmeasure_{1},\refmeasure_{2},\refmeasure_{3}\}\ge e^{-D}$.  There
  are also one-dimensional inclusion $D$-optimal exponential families: Consider, for example, the exponential family
  $\Expfam$ with sufficient statistics $\suffstat=(0,1,2)$
  and reference measure $\refmeasure=(1,4,1)$.  The two local maximizers are $u^{+}=\delta_ 2$ and $u^{-}=\frac12(\delta_ 1 +
  \delta_ 3)$.  Their $rI$-projection is $\projPE=\frac16\refmeasure$.  Hence $\DE(u^{+}) = \log\frac32$ and $\DE(u^{-}) =
  \log3$, and so $\max\DE=\log3$.  The monomial parametrization of $\Expfam$ is
  \begin{equation*}
    P_{\xi} = \frac{1}{Z_{\xi}}(1,4\xi,\xi^{2}),
  \end{equation*}
  where $\xi\in\Rnn$ and $Z_{\xi}=1+4\xi+\xi^{2}$.  Consequently, $\Expfam$ does not contain the uniform distribution.
  Therefore, any point $P\in\Expfam$ satisfies $\max\ID{\cdot}{P}>\max\DE$.
\end{ex}

The following theorem generalizes the special case of Theorem~\ref{thm:maxDdE=log2-partition-case} when $N$ is even.

\begin{thm}
  \label{thm:logNk-optimality}
  Let $\Xcal$ be a finite set of cardinality $N$.  Then $\DNk\ge\log(N/(k+1))$ for all $0\le k< N$.  If $\Ecal$
  is a $k$-dimensional exponential family that satisfies
  $\max\DE=\log(N/(k+1))$, then 
  $\Ecal$ is a partition model of a homogeneous partition of coarseness $N/(k+1)$.  In particular, if $N$ is divisible
  by $(k+1)$, then $\DNkof{N}{k} = \log(N/(k+1))$, and the dimension $\DNkof{N}{k}$-optimal models 
  are partition models.
\end{thm}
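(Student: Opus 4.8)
The plan to prove Theorem~\ref{thm:logNk-optimality} is to establish the lower bound $\DNk\ge\log(N/(k+1))$ first, and then analyze the equality case. For the lower bound I would work with the dual function $\DbarE$ on $\bdKtopA$ and Theorem~\ref{thm:Dualmaxi-glob}, which converts the statement $\max\DE=\log(1+\exp(\max\DbarE))$ into a statement about $\max\DbarE$. An exponential family of dimension $k$ has normal space $\normspace$ of dimension $N-1-k$. The key quantitative input should be a lower bound on $\max\DbarE$ in terms of $N$ and $k$: since $\max\DE=\log(N/(k+1))$ is equivalent to $\max\DbarE=\log\bigl(\frac{N}{k+1}-1\bigr)=\log\bigl(\frac{N-k-1}{k+1}\bigr)$, I expect the argument to produce a kernel distribution $u\in\bdKtopA$ whose value $\DbarE(u)$ is at least this quantity, with equality forcing rigidity.

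\textbf{Lower bound via a spread-out circuit.} The natural route is to find a single circuit vector $c\in\normspace$ whose support is large, and evaluate $\DbarE$ on (a normalization of) it. By Lemma~\ref{lem:circuitinvector} every coordinate of a nonzero $u\in\normspace$ lies in the support of some circuit, and a circuit basis spans; since $\dim\normspace=N-1-k$, a counting or pigeonhole argument on a circuit basis should yield a circuit $\sigma=\supp(c)$ with $|\sigma|\ge N/(N-1-k)\cdot(\text{something})$, but more usefully one wants the total ``spread'' of the circuits to be controlled. Concretely, I would consider the kernel distribution $u^{+}$ supported on the positive part of a circuit: the value $\DbarE(u)=\sum_x u(x)\log(|u(x)|/\refmeasurex)$ is, for a circuit on $m$ points split into a positive and negative part, bounded below by the entropy gap, and is maximized in a controlled way when the weights and reference measure are uniform over the support. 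The cleanest statement is that a circuit supported on $m$ points gives $\DE(u^{+})\ge\log(m/(m-1))$ or similar; summing the contributions of a spanning circuit basis covering all $N$ points while using only $N-1-k$ circuits forces some support to be large, and convexity of $t\mapsto\log t$ (or the concavity of entropy) pushes the bound to $\log(N/(k+1))$.

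\textbf{Equality case.} For the rigidity statement I would trace back through each inequality used in the lower bound and demand equality. Equality in the spreading/convexity argument should force every circuit in the basis to have support of exactly the same size $c:=N/(k+1)$, to have disjoint supports partitioning $\Xcal$, and to have uniform reference measure $\refmeasure$ restricted to each block together with uniform circuit weights. Disjointness of the supports lets me invoke Corollary~\ref{cor:disconnected-matroid} to write $\ol\Expfam$ as a mixture over the blocks, and on each block the one-circuit exponential family with uniform reference measure and uniform circuit vector is precisely the partition exponential family of that block by Lemma~\ref{lem:cs-simplex-partmodel} (its convex support is a simplex). Assembling the blocks gives the homogeneous partition of coarseness $c=N/(k+1)$, and Lemma~\ref{lem:partition-maxDdE} confirms $\max\DE=\log(c)$. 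The final ``in particular'' is then immediate: when $(k+1)\mid N$ the bound is attained by the explicit homogeneous partition model, so $\DNk=\log(N/(k+1))$ and the optimizers are exactly these partition models.

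\textbf{Main obstacle.} The hard part will be the equality analysis rather than the bound itself: extracting the exact combinatorial structure (all circuits equal-sized, disjoint, uniformly weighted) from a chain of inequalities requires that every estimate be tight simultaneously, and I expect the delicate point to be showing that the circuits must be \emph{disjoint} and partition $\Xcal$ — a priori a circuit basis of an $(N-1-k)$-dimensional space need not have disjoint supports. The cleanest way around this is probably to reuse the coparallel-class machinery of Lemmas~\ref{lem:submatrices-rank-one} and~\ref{lem:circuits-and-equiv.classes}, which already show that when the dimension of $\normspace$ equals the number of equivalence classes the circuits \emph{are} the equivalence classes and hence have disjoint supports; adapting that dimension-count to the present setting, where the $\log(2)$ bound is replaced by the general $\log(N/(k+1))$ bound, should supply exactly the rigidity needed to force the partition structure.
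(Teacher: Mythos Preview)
Your proposal takes a different route from the paper and, as written, does not close. The paper uses neither $\DbarE$ nor circuit combinatorics for this theorem; it argues directly in the convex support $\csA$. Assuming first that the reference measure is uniform, Carath\'eodory's theorem writes $\mommap(\frac{1}{N}\unimeasure)$ as a convex combination $\sum_{i=0}^{k}\lambda_i A_{x_i}$ of $k+1$ vertices of $\csA$. The distribution $P=\sum_i\lambda_i\delta_{x_i}$ then has $\projPE=\frac{1}{N}\unimeasure$, so $\DE(P)=\log N-H(P)\ge\log N-\log(k+1)$. Equality forces all $\lambda_i=\frac{1}{k+1}$, and a second application of Carath\'eodory, replacing some $A_{x_j}$ by $A_x$ for an arbitrary $x\in\Xcal$, shows $A_x\in\{A_{x_0},\dots,A_{x_k}\}$; hence $\csA$ is a simplex and Lemma~\ref{lem:cs-simplex-partmodel} gives the partition model. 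A separate convexity argument on the fibre over $\mommap(\frac1N\unimeasure)$ shows the inequality is \emph{strict} when the reference measure is not uniform --- a case split your outline never mentions.

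Your circuit route has two concrete gaps. First, the lower bound is not a proof: a spanning set of $N-1-k$ circuit vectors need not have supports covering $\Xcal$, and even when they do, pigeonholing on support sizes does not produce $\max\DbarE\ge\log\frac{N-k-1}{k+1}$. The phrase ``a circuit supported on $m$ points gives $\DE(u^{+})\ge\log(m/(m-1))$ or similar'' is neither precise nor in the right direction --- you need \emph{one} element of $\bdKtopA$ with large $\DbarE$, not many circuits with small values to combine. Indeed the natural witness $u\propto P-\frac1N\unimeasure$ coming from the Carath\'eodory construction typically has support all of $\Xcal$, so it is not a circuit vector at all; circuits are a red herring here. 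Second, for the equality case you invoke Lemmas~\ref{lem:submatrices-rank-one} and~\ref{lem:circuits-and-equiv.classes}, but those say nothing about block sizes; the crucial input in the $\log(2)$ analysis is Lemma~\ref{lem:equiv-classes-facial}, whose proof relies on $\DbarE\equiv 0$ on $\normspace$, a condition with no analogue when $\max\DE>\log(2)$. You would need a genuinely new argument to force the partition structure, whereas the paper obtains it almost for free from the simplex characterization of $\csA$.
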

\begin{proof}
  First assume that $\Expfam\in\expfamuniref$.
  Let $\suffstat$ be a sufficient statistics of $\Expfam$.  The moment map $\mommap$ maps the uniform distribution
  $Q=\frac1N\unimeasure$ to a point in the relative interior of $\csA$.  By Carathéodory's theorem
  there are $k+1$ vertices $\suffstat_{x_{0}},\dots,\suffstat_{x_{k}}$ of $\csA$ and
  $\lambda_{0},\dots,\lambda_{k}\in\Rnn$ such that $\mommap(Q)=\sum_{i=0}^{k}\lambda_{i}\suffstat_{x_{i}}$ and
  $\sum_{i=0}^{k}\lambda_{i}=1$.  Let $P=\sum_{i=0}^{k}\lambda_{i}\delta_{x_{i}}$, then $Q = \projPE$.  By the
  Pythagorean theorem, $\max\DE\ge\DE(P) = H(Q)-H(P)\ge\log(N)-\log(k+1)$, proving the first assertion.
  
  If equality holds, then $\lambda_{0}=\dots=\lambda_{k}=\frac1{k+1}$.  Let
  $x\in\Xcal\setminus\{x_{0},\dots,x_{k}\}$.  For $i\in\{0,\dots,k\}$ let $C_{i}$ be the convex hull of
  $\suffstat_{x_{0}},\dots,\suffstat_{x_{i-1}},\suffstat_{x_{i-1}},\dots,\suffstat_{x_{k}}$ and $\suffstat_{x}$.
  By Carathéodory's theorem the sets $C_{i}$ cover the convex hull of $\suffstat_{x_{0}},\dots,\suffstat_{x_{k}}$
  and $\suffstat_{x}$.  In particular, $\mommap(Q)\in C_{j}$ for some $j\in\{0,\dots,k\}$, so $\mommap(Q)=\sum_{i\neq
    j}\lambda'_{i}\suffstat_{x_{i}} + \lambda'_{j}\suffstat_{x}$.  By the same argument as above it follows that
  $\lambda'_{0}=\dots=\lambda'_{k}=\frac{1}{k+1}$.  Therefore, $\suffstat_{x}=(k+1)\mommap(Q)-\sum_{i\neq
    j}\suffstat_{x_{i}}=\suffstat_{x_{j}}$.

  Let $\sim$ be the equivalence relation on $\Xcal$ defined by $x\sim y$ if and only if
  $\suffstat_{x}=\suffstat_{y}$, and let $\Xcal'=(\Xcal^{1},\dots,\Xcal^{N'})$ be the corresponding
  partition into equivalence classes.  Then $N'\le k+1$ by what was shown until now.  From $\dim(\Expfam)=\dim(\csA)$
  one concludes $N'=k+1$, and $\csA$ is a simplex of dimension $k$.  By Lemma~\ref{lem:cs-simplex-partmodel}, $\Expfam$
  equals the partition model of $\Xcal'$.  Lemma~\ref{lem:partition-maxDdE} implies that the coarseness of
  $\Xcal'$ equals $\frac N{k+1}$, which must be an integer.  Furthermore, $\Xcal'$ is homogeneous.

  It remains to prove $\max\DE>\log(N/(k+1))$ 
  in the case $\Expfam\notin\expfamuniref$.
  Let $\projPE$ be the $rI$-projection of the uniform distribution, and let $\Ncal_{\unimeasure}$ be the set of
  probability distributions that $rI$-project to $\projPE$.  The function $\DE$ is convex on $\Ncal_\unimeasure$, hence
  $\DE$ is maximal at the vertices of $\Ncal_\unimeasure$.  Let $P$ be a vertex of $\Ncal_{\unimeasure}$.  Assume that
  $v\in\Ncal$ satisfies $\supp(v)\subseteq\supp(P)$.  Then there exists $\epsilon>0$ such that $P\pm\epsilon
  v\in\Ncal_{\unimeasure}$ and $P = \frac12(P + \epsilon v) + \frac12(P - \epsilon v)$.  Hence $v=0$.  Therefore, the
  set $\{A_{x}:P(x)>0\}$ is linearly independent.  In particular $|\supp(P)|\le\dim(\Xcal)+1$.

  Denote by $\Expfam_{\unimeasure}$ the exponential family with uniform reference measure and with the same normal space
  as $\Expfam$.  On $\Ncal_\unimeasure$ the difference
  \begin{equation*}
    \delta(P) := \DE(P) - D_{\Expfam_{\unimeasure}}(P)
    = - \sum_{x\in\Xcal}P(x)\log\projPE(x) - \log N
  \end{equation*}
  is an affine function that is positive at the uniform distribution.  Hence there is a vertex $P$ of
  $\Ncal_\unimeasure$ such that $\delta(P)>0$, and so
  $\DE(P)> D_{\Expfam_{\unimeasure}}(P)=\log N-H(P)\ge\log(N/(k+1))$.
\end{proof}

The value of $\DNk$ is unknown when $k+1$ does not divide $N$.
The situation is known for $N=3$, see Example~\ref{ex:3-optimality}: If $1\le k<3$, then $\DNkof{N}{k}=\log(2)$, and
all dimension $\DNkof{N}{1}$-optimal exponential families that contain the uniform distribution are partition models.  The following conjecture generalizes this example and Theorems~\ref{thm:maxDdE=log2-partition-case} and~\ref{thm:logNk-optimality}:
\begin{conj}
  \label{con:DNk-partmod}
  $\DNk = \log\lceil\frac{N}{k+1}\rceil$, and the dimension $\DNk$-optimal exponential families containing the uniform
  distribution are partition models.
\end{conj}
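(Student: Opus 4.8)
The plan is to split Conjecture~\ref{con:DNk-partmod} into the upper bound $\DNk\le\log\lceil\tfrac{N}{k+1}\rceil$ and the matching lower bound together with the structural statement; throughout write $c=\lceil\tfrac{N}{k+1}\rceil$. The upper bound is the routine direction: I would partition $\Xcal$ into $k+1$ blocks whose sizes are as equal as possible, so that each block has cardinality $\lfloor\tfrac{N}{k+1}\rfloor$ or $\lceil\tfrac{N}{k+1}\rceil$ and the coarseness equals $c$. By Lemma~\ref{lem:partition-maxDdE} the associated partition model has $\max\DE=\log c$, and by Lemma~\ref{lem:cs-simplex-partmodel} it is a $k$-dimensional exponential family. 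Hence $\DNk\le\log c$, and this simultaneously exhibits the conjectured optimizers.

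The heart of the matter is the lower bound $\max\DE\ge\log c$ for \emph{every} $k$-dimensional exponential family. My first move is to pass to the dual picture: by Theorem~\ref{thm:Dualmaxi-glob}, applied to a global maximizer, and since $t\mapsto\log(1+e^{t})$ is strictly increasing,
\begin{equation*}
  \max\DE=\log\bigl(1+\exp\bigl(\max_{\bdKtopA}\DbarE\bigr)\bigr),
\end{equation*}
so it is equivalent to prove $\max_{\bdKtopA}\DbarE\ge\log(c-1)$. I would first treat $\Expfam\in\expfamuniref$, where $\DbarE(u)=\sum_{x}u(x)\log|u(x)|=H(u^{-})-H(u^{+})$ for $u=u^{+}-u^{-}\in\bdKtopA$. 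Thus the problem becomes purely combinatorial: over every normal space $\normspace$ of dimension $N-k-1$ of zero-sum vectors, produce $u\in\normspace$, normalized so that $u^{+}(\Xcal)=u^{-}(\Xcal)=1$, with $H(u^{-})-H(u^{+})\ge\log(c-1)$. The extremal model makes the target transparent: for the balanced partition one may take $u^{+}=\delta_{x}$ for $x$ in a maximal block and $u^{-}$ uniform on the remaining $c-1$ points of that block, realizing $H(u^{-})-H(u^{+})=\log(c-1)$ exactly.

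To build such a $u$ in general I would seek an element $x$ with $A_{x}\in\mathrm{conv}\{A_{y}:y\neq x\}$, so that the moment conditions $A_{x}=\sum_{y}u^{-}(y)A_{y}$ can be solved by a distribution $u^{-}$ forced to spread over at least $c-1$ points; a pigeonhole on the $k+1$ vertices appearing in a Carathéodory decomposition of $\mommap(\tfrac1N\unimeasure)$ should supply the count $c-1$. For $\Expfam\notin\expfamuniref$ I would imitate the reduction in the proof of Theorem~\ref{thm:logNk-optimality}: the difference $\DE-D_{\Expfam_{\unimeasure}}$ is affine on the fiber $\Ncal_{\unimeasure}$ and positive at the uniform distribution, which should let me transfer the uniform-reference bound and obtain a strict inequality off $\expfamuniref$, explaining why the characterization of optimizers as partition models is claimed only within $\expfamuniref$. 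The structural claim at equality would follow by tracking the equality cases $H(u^{-})=\log(c-1)$, $H(u^{+})=0$, which force $u^{+}$ to be a point mass and $u^{-}$ uniform; a rigidity analysis of the circuit and coparallel structure in the spirit of Lemmas~\ref{lem:submatrices-rank-one} and~\ref{lem:circuits-and-equiv.classes} should then pin down a simplex convex support, i.e.\ a partition model by Lemma~\ref{lem:cs-simplex-partmodel}.

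The main obstacle is precisely the inequality $\max_{\bdKtopA}\DbarE\ge\log(c-1)$ when $k+1\nmid N$. The two techniques available in this paper degenerate here: the Carathéodory--entropy estimate of Theorem~\ref{thm:logNk-optimality} only yields the weaker value $\log\tfrac{N}{k+1}$ (with no slack exactly when the $k+1$ Carathéodory weights are uniform), while the circuit analysis of Section~\ref{sec:maxDdE=log2} relies on the rigid identity $\DbarE\equiv0$ that holds only at $\log 2$. Bridging the gap from $\log\tfrac{N}{k+1}$ up to $\log\lceil\tfrac{N}{k+1}\rceil$ requires a genuinely new argument exploiting the integrality obstruction to a homogeneous partition; a compactness-plus-variational approach on the compact Grassmannian of normal spaces, showing that any minimizer of $\max\DE$ among $k$-dimensional families in $\expfamuniref$ must have simplex convex support, seems the most promising route, but controlling the critical-point structure of the nonsmooth function $\Expfam\mapsto\max\DE$ is the crux, and is what keeps the statement at the level of a conjecture.
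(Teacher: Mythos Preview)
The statement you are trying to prove is labeled a \emph{conjecture} in the paper and is explicitly left open there; the paper does not contain a proof. What the paper does establish are the two special cases you already invoke: Theorem~\ref{thm:logNk-optimality} settles the conjecture when $k+1\mid N$, and Section~\ref{sec:maxDdE=log2} settles it when $\lceil N/(k+1)\rceil=2$. Immediately after stating the conjecture the paper only proves the weaker Lemma~\ref{lem:inclusion-optimal-partition-models} on inclusion optimality of certain partition models.

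Your upper bound $\DNk\le\log c$ via a balanced partition and Lemma~\ref{lem:partition-maxDdE} is correct and is precisely the easy direction the paper takes for granted. Your diagnosis of the obstruction is also accurate and matches the paper's own discussion: the Carath\'eodory--entropy argument of Theorem~\ref{thm:logNk-optimality} yields only $\log(N/(k+1))$, not $\log\lceil N/(k+1)\rceil$, and the circuit analysis of Section~\ref{sec:maxDdE=log2} is tied to the identity $\DbarE\equiv 0$ specific to $\log 2$. The paper says exactly this (``The value of $\DNk$ is unknown when $k+1$ does not divide $N$''). So your proposal is not a proof but a correct outline of the known partial results together with an honest identification of the missing step; there is no ``paper's own proof'' to compare it against.
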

The following weaker statement holds:
\begin{lemma}
  \label{lem:inclusion-optimal-partition-models}
  Let $\Xcal'=\left\{\Xcal^{1},\dots,\Xcal^{N'}\right\}$ be a partition of coarseness $c<N$ such that
  $\Xcal^{1}$ has cardinality $l\le c$ and all other components $\Xcal^{i}$ for $i>1$ have cardinality $c$.
  Then the partition model $\Expfam$ of $\Xcal'$ is $\log(c)$-inclusion optimal.
\end{lemma}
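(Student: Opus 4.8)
The partition model $\Expfam$ satisfies $\max\DE=\log(c)$ by Lemma~\ref{lem:partition-maxDdE}, so it suffices to prove that every exponential family $\Expfam'$ strictly contained in $\Expfam$ has $\max D_{\Expfam'}>\log(c)$. The plan is to pass to the quotient ``set of blocks'' $\Xcal'$ and then exhibit a single maximizer of $\DE$ on which $D_{\Expfam'}$ is strictly larger. First I would record the reduction. Since every member of $\Expfam$ is uniform on each block, so is every member of $\Expfam'$, and a reference measure of $\Expfam'$ may be taken uniform on blocks. Hence the block marginalization $P\mapsto\widehat P$, $\widehat P(\Xcal^{i})=P(\Xcal^{i})$, identifies $\Expfam'$ with an exponential family $\widehat{\Expfam'}$ on $\Xcal'$ of the same dimension. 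As the inclusion is strict, the extended tangent space of $\Expfam'$ is a proper subspace of that of $\Expfam$, so $\dim\widehat{\Expfam'}=\dim\Expfam'<\dim\Expfam=N'-1$; thus $\widehat{\Expfam'}$ is proper on $\Xcal'$ and has nonzero normal space.

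The second ingredient is a chain rule for the divergence: for every $P\in\pmsX$,
\begin{equation*}
  D_{\Expfam'}(P)=\DE(P)+D_{\widehat{\Expfam'}}(\widehat P).
\end{equation*}
I would derive it from the Pythagorean relation for $rI$-projections. With $Q=\projPE$ the block average of $P$ and $R=\proj{P}{\Expfam'}$, the inclusion $\normspace\subseteq\normspace'$ of the normal spaces of $\Expfam$ and $\Expfam'$ yields $R=\proj{Q}{\Expfam'}$ together with $D(P\|R)=D(P\|Q)+D(Q\|R)$; since $Q$ is uniform on blocks with the same block marginal as $P$, the term $D(Q\|R)$ collapses to $D_{\widehat{\Expfam'}}(\widehat P)$, while $D(P\|Q)=\DE(P)$.

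By Lemma~\ref{lem:partition-maxDdE} the maximizers of $\DE$ are precisely the distributions supported on the blocks of cardinality $c$ that reduce to a point mass inside each such block, and their block marginals range over all of $\pms(L)$, where $L\subseteq\Xcal'$ is the set of blocks of cardinality $c$. In view of the chain rule it therefore suffices to find one $\widehat P\in\pms(L)$ with $\widehat P\notin\ol{\widehat{\Expfam'}}$, so that $D_{\widehat{\Expfam'}}(\widehat P)>0$: the corresponding maximizer $P$ of $\DE$ then satisfies $D_{\Expfam'}(P)=\log(c)+D_{\widehat{\Expfam'}}(\widehat P)>\log(c)$. To build such a $\widehat P$ I would use Theorem~\ref{thm:impl-circuit-eqs}. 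Choose any circuit $u$ of the nonzero normal space of $\widehat{\Expfam'}$ (it exists by Lemma~\ref{lem:circuitinvector}); as normal vectors are orthogonal to the constants, $u^{+}$ and $u^{-}$ are both nonzero. Let $\widehat P$ have full support on $L$. If $\supp(u)\subseteq L$, the circuit equation for $u$ is a nontrivial multiplicative relation violated by a generic full-support $\widehat P$; otherwise $\supp(u)$ meets $\Xcal'\setminus L$, which consists only of the small block $\Xcal^{1}$, so one side of the circuit equation carries a vanishing coordinate and equals $0$ while the other side is a positive product over indices in $L$. Either way $\widehat P\notin\ol{\widehat{\Expfam'}}$, as required.

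I expect the inhomogeneous case $l<c$ to be the main obstacle: there the admissible marginals $\widehat P$ are confined to $L\subsetneq\Xcal'$, and one must still force $\widehat P$ off $\ol{\widehat{\Expfam'}}$, i.e.\ show that the face of $\widehat{\Expfam'}$ on $L$ does not fill all of $\pms(L)$. Because exponential family closures are not convex, the naive ``a proper face has strictly smaller dimension'' argument is unavailable; the circuit computation above is exactly what supplies this fact rigorously, and making the chain rule and this last step interlock cleanly is the crux of the proof.
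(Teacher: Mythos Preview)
Your argument is correct and follows the same skeleton as the paper's: both use the Pythagorean identity to write $D_{\Expfam'}(P)=\DE(P)+D(\projPE\|\proj{P}{\Expfam'})$ and then pick a global maximizer $P$ of $\DE$ whose $rI$-projection $Q=\projPE$ lies outside $\ol{\Expfam'}$, so the second summand is strictly positive. Your passage to the quotient $\Xcal'$ and the ``chain rule'' are just this Pythagorean step rewritten under the bijection $\ol\Expfam\cong\pms(\Xcal')$, and your use of Lemma~\ref{lem:partition-maxDdE} to see that the block marginals of maximizers sweep out $\pms(L)$ is exactly the content of Corollary~\ref{cor:part-mod:any-Q-is-rI-of-glob-max} that the paper invokes.

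The one genuine difference is how the two proofs certify that such a $Q$ exists, i.e.\ that $\ol{\Expfam'}\cap\pms(\Zcal)\subsetneq\ol\Expfam\cap\pms(\Zcal)$ (equivalently $\pms(L)\not\subseteq\ol{\widehat{\Expfam'}}$). The paper argues by a dimension count: if equality held, then $\dim(\Expfam')\ge\dim(\ol{\Expfam'}\cap\pms(\Zcal))+1=\dim(\ol\Expfam\cap\pms(\Zcal))+1=\dim(\Expfam)$, forcing $\Expfam'=\Expfam$. You instead build an explicit witness via a circuit vector $u$ of the nonzero normal space of $\widehat{\Expfam'}$ and Theorem~\ref{thm:impl-circuit-eqs}. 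Your construction is a bit longer but more constructive, and it directly shows that the paper's ``Case~2'' never occurs for a strict subfamily; the paper's dimension argument is shorter but less explicit. Either route closes the proof.
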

\begin{proof}
  The fact that $\max\DE = \log(c)$ follows from Lemma~\ref{lem:partition-maxDdE}.  It remains to prove the optimality.
  Let $\Expfam'\subseteq\Expfam$ be an exponential family contained in $\Expfam$.  Let $\Zcal$ be the union of all
  blocks of $\Xcal'$ of cardinality $c$.  Assume that there exists a probability measure
  $Q\in\ol\Expfam\setminus\ol{\Expfam'}$ with support contained in $\Zcal$.  By
  Corollary~\ref{cor:part-mod:any-Q-is-rI-of-glob-max} there exists $P\in\pms(\Zcal)$ such that $Q=\projPE$ and
  $D(P\|Q)=\log(c)$.  Let $Q'=\proj{P}{\Expfam'}\in\Expfam$.  Then $D(P\|Q') = D(P\|Q) + D(Q\|Q')>\log(c)$ by the
  Pythagorean identity.
  Otherwise, if $\ol\Expfam\cap\pms(\Zcal)=\ol{\Expfam'}\cap\pms(\Zcal)$, then
  $\dim(\Expfam) = \dim(\ol\Expfam\cap\pms(\Ycal)) + 1=\dim(\ol\Expfam'\cap\pms(\Ycal)) + 1\le\dim(\Expfam')$, so
  $\Expfam=\Expfam'$.
\end{proof}

Theorem~\ref{thm:logNk-optimality} can be applied to the hierarchical models $\Expfam_{K}$ for $K\subseteq\cardn$
introduced in Remark~\ref{rem:hier-partmod}.
By Theorem~\ref{thm:logNk-optimality} the hierarchical model $\Expfam_{K}$ is dimension optimal with $\max
D(\cdot\|\Expfam_{K}) = \sum_{i\in\cardn\setminus K}\log(N_{i})$.
If $N_{\numvar}=2$, then the choice $K = \{1,\dots,\numvar-1\}$ yields an exponential family of dimension less than
$|\Xcal|/2$ such that $\max D(\cdot\|\Expfam_{K}) = \log(2)$, and Theorem~\ref{thm:maxDdE=log2} implies that
$\Expfam_{K}$ is dimension optimal.
The following proposition says that the exponential families $\Expfam_{K}$ are the unique dimension $D$-optimal
hierarchical models 
for many values of $D$.
\begin{prop}
  Let $\Xcal=\Xcal_{1}\times\dots\times\Xcal_{\numvar}$, where $N_{i}=|\Xcal_{i}|<\infty$.  For any
  $K\subseteq\cardn$ let $D_{K}=\sum_{i\notin K}\log(N_{i})$.  The hierarchical model $\Expfam_{K}$ is dimension
  $D_{K}$-optimal. 

  Let $l$ be any divisor of $N:=|\Xcal|=\prod_{i=1}^{\numvar}N_{i}$.  If $\Expfam$ is any hierarchical model
  that is dimension $\log(N/l)$-optimal, 
  then there is a subset $K\subseteq\cardn$ such that $\Expfam=\Expfam_{K}$.
\end{prop}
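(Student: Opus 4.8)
The plan is to prove the two assertions separately, reducing the second (the substantive one) to the classification contained in Theorem~\ref{thm:logNk-optimality}.

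For the first assertion I would argue as in the paragraph preceding the proposition. By Remark~\ref{rem:hier-partmod}, $\Expfam_K$ is the partition model of the homogeneous partition $\Xcal^K$ of coarseness $c=\prod_{i\notin K}N_i$, so Lemma~\ref{lem:partition-maxDdE} gives $\max\DE=\log(c)=D_K$. Its dimension is $k=\prod_{i\in K}N_i-1$, whence $c=N/(k+1)$ and $D_K=\log(N/(k+1))$. Theorem~\ref{thm:logNk-optimality} yields $\DNkof{N}{k'}\ge\log(N/(k'+1))$ for every $k'$, and for $k'<k$ this bound is strictly larger than $\log(N/(k+1))=D_K$; thus every exponential family of dimension below $k$ has maximal divergence exceeding $D_K$, while $\max D_{\Expfam_K}=D_K$. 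That is exactly dimension $D_K$-optimality.

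For the second assertion let $\Expfam=\Expfam_\Delta$ be a dimension $\log(N/l)$-optimal hierarchical model and set $k=\dim\Expfam$. Since $l\mid N$, I can split the $N$-element set $\Xcal$ into $l$ blocks of equal size $N/l$; the resulting homogeneous partition model $M$ has dimension $l-1$ and, by Lemma~\ref{lem:partition-maxDdE}, $\max D_M=\log(N/l)$. If $k>l-1$, then $M$ would be an exponential family of strictly smaller dimension with $\max D_M=\log(N/l)\not>\log(N/l)$, contradicting optimality; hence $k\le l-1$. Conversely optimality forces $\max\DE\le\log(N/l)$, whereas Theorem~\ref{thm:logNk-optimality} gives $\max\DE\ge\log(N/(k+1))$, so $k+1\ge l$. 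Therefore $k=l-1$ and $\max\DE=\log(N/(k+1))$, and the classification part of Theorem~\ref{thm:logNk-optimality} applies: $\Expfam$ is a partition model of a homogeneous partition of coarseness $N/l$. To identify that partition with the product structure, I would use that $\Expfam_\Delta$ has uniform reference measure and sufficient statistics $A=A_\Delta$; by Lemma~\ref{lem:cs-simplex-partmodel} its convex support is then a simplex with vertex set $\{A_x:x\in\Xcal\}$ and the underlying partition is recovered by $x\sim y\iff A_x=A_y$. By the proof of Lemma~\ref{lem:csA-of-hierarchical}, $A_x=A_y$ holds exactly when $x_i=y_i$ for all $i\in K:=\bigcup_{J\in\Delta}J$, i.e.\ exactly when $x\sim_K y$. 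Hence the partition of $\Expfam$ is $\Xcal^K$, and by Remark~\ref{rem:hier-partmod} $\Expfam=\Expfam_{\Xcal^K}=\Expfam_K$. Note this automatically yields $\prod_{i\in K}N_i=l$, so if $l$ is not a subproduct of the $N_i$ there is no such $\Expfam$ and the implication is vacuous.

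I expect the main obstacle to be the identification step in the last paragraph: one must show that the partition produced abstractly by Theorem~\ref{thm:logNk-optimality} is genuinely induced by a coordinate subset $K$, and not some partition unrelated to the factorization $\Xcal=\Xcal_1\times\cdots\times\Xcal_n$. What makes this go through is that the hierarchical sufficient statistic $A_\Delta$ already separates points precisely along $\sim_K$, so the two descriptions of the partition (equal columns of $A$, and equal $K$-coordinates) coincide; the only feature of optimality being exploited is that it collapses $\Expfam_\Delta$ to a model whose convex support is a simplex, which by Lemma~\ref{lem:cs-simplex-partmodel} is precisely the condition of being a partition model.
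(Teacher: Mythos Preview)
Your argument is correct and follows essentially the same route as the paper: reduce via Theorem~\ref{thm:logNk-optimality} to the statement that a hierarchical model which is a partition model must be some $\Expfam_K$, and then pin down $K=\bigcup_{J\in\Delta}J$ using Lemmas~\ref{lem:cs-simplex-partmodel} and~\ref{lem:csA-of-hierarchical}. The only cosmetic difference is in the identification step: you read off the partition directly from the equivalence $A_x=A_y\Leftrightarrow x\sim_K y$, whereas the paper instead observes $\Expfam_\Delta\subseteq\Expfam_K$ and uses that both convex supports are simplices with $\prod_{i\in K}N_i$ vertices (Lemma~\ref{lem:csA-of-hierarchical}) to match dimensions; your explicit verification that $k=l-1$ is a detail the paper leaves implicit.
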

The proposition implies that if $l$ is not of the form $\prod_{i\in K}N_{i}$ for some subset $K\subseteq\cardn$, then
there exists no hierarchical model that is dimension $\log(N/l)$-optimal.  
\begin{proof}
  It only remains to prove the last statement.  If $\Expfam$ satisfies the assumptions, then $\Expfam$ is a partition
  model by Theorem~\ref{thm:logNk-optimality}.  Therefore, it suffices to prove that any hierarchical model that is also
  a partition model is of the form $\ol{\Expfam_{K}}$.

  Let $\Delta$ be a simplicial complex on $\cardn$ such that $\Expfam=\Expfam_{\Delta}$, and let $K=\cup_{J\in\Delta}J$.
  Then $\Expfam$ is a submodel of $\Expfam_{K}$.  Let $\suffstat$ be a sufficient statistics of $\Expfam$.  By
  Lemma~\ref{lem:csA-of-hierarchical} the convex supports of $\Expfam$ and $\Expfam_{K}$ have the same number of
  vertices.  By Lemma~\ref{lem:cs-simplex-partmodel} both are simplices, hence they have the same dimension, so
  $\Expfam=\Expfam_{K}$.
\end{proof}

\section{Discussion}
\label{sec:conclusion}

Conjecture~\ref{con:DNk-partmod} would imply that the partition models of
Lemma~\ref{lem:inclusion-optimal-partition-models} are dimension optimal among all exponential families.  If the
conjecture were true, then it would suggest the following interpretation: In many cases the information divergence $\ID
PQ$ can be interpreted as the information which is lost when $P$ is the true probability distribution, but computations
are carried out with $Q$.  For example, in the case of the independence model $\Expfam_{1}$ of two variables,
$D_{\Expfam_{1}}$ equals the mutual information and measures the amount of information that
one variable carries about the other variable.  If a probability measure is replaced by its $rI$-projection, then this
information is lost.

For the exponential families $\Expfam_{K}$ the loss equals $D_{K}=\sum_{i\notin K}\log(N_{i})$, which is precisely the
maximal information that the random variables that are not in $K$ can carry.  Assuming that the conjecture is true, if
the model is smaller than $\Expfam_{K}$, then, in general, more information can be lost.  In this interpretation the
fact that $\max\DE\ge\log(2)$ unless $\Expfam=\spmsX$ means that for any exponential family $\Expfam\neq\spmsX$ in
general at least one bit is necessary to compensate the approximation of arbitrary probability measures.

\subsubsection*{Acknowledgements}

I thank Yaroslav Boulatov, who first asked the main question studied in this work.  Further thanks goes to Nihat Ay,
whose questions led me in similar directions, but from a different starting point.

\bibliographystyle{IEEEtranSpers}
\bibliography{general}

\end{document}